\newtheorem{theorem}{Theorem}[section]
\newtheorem{proposition}[theorem]{Proposition}
\newtheorem{lemma}[theorem]{Lemma}
\theoremstyle{definition}
\newtheorem{definition}[theorem]{Definition}
\newtheorem{remark}[theorem]{Remark}
\numberwithin{equation}{section}
\begin{document}
\title[Special values and transformations]
{Zeros of $p$-adic hypergeometric functions, $p$-adic analogues of Kummer's and Pfaff's identities}


{}
\author{Neelam Saikia}
\address{Department of Mathematics, Indian Institute of Technology Guwahati, North Guwahati, Guwahati-781039, Assam, INDIA}
\curraddr{}
\email{neelam16@iitg.ac.in\\
nlmsaikia1@gmail.com}
\thanks{}


\subjclass[2010]{Primary: 33E50, 33C20, 33C99, 11S80, 11T24.}
\keywords{Character sum; Gauss sums; Jacobi sums; $p$-adic Gamma function.}
\thanks{The author acknowledges
the financial support of Department of Science and Technology, Government of India for financial support under INSPIRE Faculty Award.}
\begin{abstract} 
We classify all the zeros and non-zero values of a family of hypergeometric series in the $p$-adic setting. These values of hypergeometric series in the $p$-adic setting lead to transformations of hypergeometric series in the $p$-adic setting which can be described as $p$-adic analogues of Kummer's and Pfaff's linear transformations on classical hypergeometric series. We also evaluate certain summation identities for hypergeometric series in the $p$-adic setting as well as Gaussian hypergeometric series.
\end{abstract}
\maketitle
\section{Introduction and statement of results}
The main goal of this paper is to study zeros of hypergeometric series in the $p$-adic setting introduced by D. McCarthy \cite{mccarthy-ijnt-2, mccarthy-pacific}. We also establish analogues of classical hypergeometric series transformations, particularly very special cases of Kummer's and Pfaff's linear transformations, for hypergeometric series in the $p$-adic setting. This type of questions were posed by D. McCarthy \cite{mccarthy-pacific}. We now begin with the definition of classical hypergeometric series.
For a complex number $a$ and a non negative integer $k$ the rising factorial denoted by $(a)_k$ is defined by $(a)_k:=a(a+1)(a+2)\cdots(a+k-1)$ for $k>0$ and $(a)_0:=1.$ Then for $a_i,b_i,\lambda\in\mathbb{C}$ with $b_i\not\in\{\ldots,-3,-2,-1,0\},$ the classical hypergeometric series ${_{r+1}F_r}$ is defined by
\begin{align*}
{_{r+1}}F_{r}\left(\begin{array}{cccc}
                   a_1, & a_2, & \ldots, & a_{r+1} \\
                    & b_1, & \ldots, & b_r
                 \end{array}\mid \lambda
\right):=\sum_{k=0}^{\infty}\frac{(a_1)_k\cdots(a_{r+1})_k}
{(b_1)_k\cdots(b_r)_k}\cdot\frac{\lambda^k}{k!}.
\end{align*}
This series converges for $|\lambda|<1.$ Classical hypergeomeometric series play important role in different areas of mathematics. For example, they have significant applications in modular forms, elliptic curves, representation theory, differential equations etc. \cite{mccarthy-proc, mccarthy-ijnt, mortenson}. J. Greene \cite{greene} introduced the notion of hypergeometric series over finite fields which are finite field analogues of classical hypergeometric series.
Let $p$ be an odd prime and $\mathbb{F}_p$ denote a finite field with $p$ elements. Let $\widehat{\mathbb{F}_p^\times}$ denote the group of all multiplicative characters of $\mathbb{F}_p^{\times}$ and $\overline{\chi}$ denote the inverse of a multiplicative character $\chi$. We extend the domain of each $\chi\in\widehat{\mathbb{F}_p^\times}$ to $\mathbb{F}_p$ by simply setting $\chi(0):=0$ including the trivial character $\varepsilon.$ 
For multiplicative characters $\chi$ and $\psi$ of $\mathbb{F}_p$ the Jacobi sum is defined by
\begin{align}\label{jacobi}
J(\chi,\psi):=\sum_{y\in\mathbb{F}_p}\chi(y)\psi(1-y),
\end{align}
and the normalized Jacobi sum known as binomial is defined by
\begin{align}\label{binomial}
{\chi\choose \psi}:=\frac{\psi(-1)}{p}J(\chi,\overline{\psi}).
\end{align}
Let $n$ be a non negative integer. For multiplicative characters $A_1,A_2,\ldots,A_{n+1},$ and $B_1,B_2,\ldots,B_n$ of 
$\mathbb{F}_p$ with $t\in\mathbb{F}_p,$ J. Greene \cite{greene} defined ${_{n+1}F_n}(\cdots)$ hypergeometric function over finite field 
$\mathbb{F}_p$ by
\begin{align*}
{_{n+1}}F_{n}\left(\begin{array}{cccc}
                   A_1, & A_2, & \ldots, & A_{n+1} \\
                    & B_1, & \ldots, & B_n
                 \end{array}\mid t
\right):=\frac{p}{p-1}\sum_{\chi\in\widehat{\mathbb{F}_p^\times}}
{A_1\chi\choose\chi}\cdots{A_{n+1}\chi\choose B_n\chi}\chi(t).
\end{align*}
This function is also known as Gaussian hypergeometric function.
These functions were developed to have a parallel study of character sums analogous to special functions. 
Gaussian hypergeometric functions satisfy many identities which are often analogues of classical hypergeometric series identities, for more details, see \cite{greene}. Since the entries of the Gaussian hypergeometric function are multiplicative characters so results involving Gaussian hypergeometric functions often be restricted to primes in certain congruence classes for the existence of characters of specific orders, see for example \cite{EG, fuselier2, lennon-1, lennon-2}. To overcome these limitations, D. McCarthy \cite{mccarthy-ijnt-2, mccarthy-pacific} defined a function $_nG_n[\cdots]$ in terms of quotients of $p$-adic gamma functions which can be best described as an analogue of classical hypergeometric series in the $p$-adic setting. Let $\mathbb{Z}_p$ and $\mathbb{Q}_p$ denote the ring of $p$-adic integers and the field of $p$-adic numbers, respectively.
Let $\Gamma_p(\cdot)$ denote the Morita's $p$-adic gamma function. Let $\omega$ denote the Teichm\"{u}ller character of $\mathbb{F}_p,$ satisfying $\omega(a)\equiv a\pmod{p},$ and $\overline{\omega}$ denote the character inverse of $\omega$. For $x\in\mathbb{Q}$ let $\lfloor x\rfloor$ denote the greatest integer less than or equal to $x$ and $\langle x\rangle$ denote the fractional part of $x$, satisfying $0\leq\langle x\rangle<1$.
We now recall the McCarthy's hypergeometric function $_{n}G_{n}[\cdots]$ in the $p$-adic setting.
\begin{definition}\cite[Definition 5.1]{mccarthy-pacific} \label{defin1}
Let $p$ be an odd prime and $t \in \mathbb{F}_p$.
For positive integer $n$ and $1\leq k\leq n$, let $a_k$, $b_k$ $\in \mathbb{Q}\cap \mathbb{Z}_p$.
Then the function $_{n}G_{n}[\cdots]$ is defined as
\begin{align}
&{_nG_n}\left[\begin{array}{cccc}
             a_1, & a_2, & \ldots, & a_n \\
             b_1, & b_2, & \ldots, & b_n
           \end{array}|t
 \right]:=\frac{-1}{p-1}\sum_{a=0}^{p-2}(-1)^{an}~~\overline{\omega}^a(t)\notag\\
&\times \prod\limits_{k=1}^n(-p)^{-\lfloor \langle a_k \rangle-\frac{a}{p-1} \rfloor -\lfloor\langle -b_k \rangle +\frac{a}{p-1}\rfloor}
 \frac{\Gamma_p(\langle a_k-\frac{a}{p-1}\rangle)}{\Gamma_p(\langle a_k \rangle)}
 \frac{\Gamma_p(\langle -b_k+\frac{a}{p-1} \rangle)}{\Gamma_p(\langle -b_k \rangle)}.\notag
\end{align}
\end{definition}
This function is also known as $p$-adic hypergeometric function. It is clear from the definition \ref{defin1} that the value of $_nG_n[\cdots]$ function depends only on the fractional part of the parameters $a_k$ and $b_k$. Therefore, we may assume that $0\leq a_k,b_k<1.$ Gaussian hypergeometric functions satisfy many powerful transformation formulas that are often mirror symmetrical to their classical counterparts, for details see \cite{greene}. Note that these results can be converted into identities involving $_{n}G_{n}[\cdots]$ via the transformations 
\cite[Lemma 3.3]{mccarthy-pacific} and \cite[Proposition 2.5]{mccarthy-ffa} between finite field hypergeometric function and $p$-adic hypergeometric series. However, the new identities involving $_{n}G_{n}[\cdots]$ will be valid for the primes $p$, where the original characters existed over $\mathbb{F}_p.$
Therefore, it will be interesting to extend such results to almost all primes. In \cite{fm}, Fuselier-McCarthy established certain transformation identities for $p$-adic hypergeometric series in full generality. In particular, they proved a transformation result analogous to a Whipple's result for ${_3F_2}$-classical hypergeometric series. These transformations eventually leaded to settle one supercongruence conjecture of Rodriguez-Villegas between a truncated 
${_4F_3}$-classical hypergeometric series and the Fourier coefficients of a certain weight four modular form. This is one of the motivation to study transformation formulas with the expectation that transformation formulas will lead to new identities.
 Let  $\chi_4,$ be a multiplicative character of $\mathbb{F}_p$
of orders 4. Also, let $\varphi$ be the quadratic character of $\mathbb{F}_p.$ Consider the classical hypergeometric series
${_2F_1}\left(\begin{array}{cc}
            \frac{1}{4} , & \frac{3}{4}\vspace{1mm} \\
              & \frac{1}{2}
           \end{array}\mid t\right)$,   Then the finite field analogue of this series can be considered as 
           ${_2F_1}\left(\begin{array}{cc}
            \chi_4, & \chi_4^3 \\
              & \varphi
           \end{array}\mid t\right)$
 Using the transformations \cite[Lemma 3.3]{mccarthy-pacific}, and \cite[Proposition 2.5]{mccarthy-ffa}, the function ${_2G_2}\left[\begin{array}{cc}
            \frac{1}{4} , & \frac{3}{4}\vspace{1mm} \\
              0, & \frac{1}{2}
           \end{array}\mid \frac{1}{t}\right]$ can be described as $p$-adic analogue of the classical hypergeometric series
 ${_2F_1}\left(\begin{array}{cc}
           \frac{1}{4} , & \frac{3}{4}\vspace{1mm} \\
              & \frac{1}{2}
           \end{array}\mid t\right)$. 
We know that classical hypergeometric series satisfy many powerful identities. 
For example, Gauss \cite{gauss}, Kummer \cite{kummer}, Whipple \cite[p. 54]{lidl}, Saalch\"{u}tz \cite[p. 49]{slater}, Dixon \cite[p. 51]{slater}, and Watson \cite[p. 54]{slater}
studied special values of classical hypergeometric series. For instance, the follwing evaluation of classical hypergeometric series in terms of quotients of classical gamma function was due to Gauss \cite{gauss}. If $R(c-a-b)>0$ then
\begin{align}\label{gauss}
{_2F_1}\left(\begin{array}{cc}
           a, & b \\
           & c
         \end{array}\mid1\right)=\frac{\Gamma(c)\Gamma(c-a-b)}{\Gamma(c-a)\Gamma(c-b)}. 
\end{align}
 If we put $a=\frac{1}{4},$ $b=\frac{3}{4}$ and  $c=1+\frac{1}{2}$ in \eqref{gauss} then we have
\begin{align}\label{gauss-value}
{_2F}_{1}\left(\begin{array}{cc}
           \frac{1}{4}, & \frac{3}{4} \\
           & 1+\frac{1}{2}
         \end{array}\mid1\right)=
         \frac{\Gamma(1+\frac{1}{2})\Gamma(\frac{1}{2})}{\Gamma(1+\frac{1}{4})\Gamma(\frac{3}{4})}.
         \end{align}
         Also, consider the Kummer's theorem \cite{kummer}
         \begin{align}\label{kummer}
         {_2F_1}\left(\begin{array}{cc}
           a, & b \\
           & 1+b-a
         \end{array}\mid-1\right)=\frac{\Gamma(1+b-a)\Gamma(1+\frac{b}{2})}{\Gamma(1+b)\Gamma(1+\frac{b}{2}-a)}.
         \end{align}
          Putting $a=\frac{1}{4}$ and $b=\frac{3}{4}$ into \eqref{kummer} we have    
\begin{align}\label{kummer-value}
{_2F}_{1}\left(\begin{array}{cc}
           \frac{1}{4}, & \frac{3}{4} \\
           & 1+\frac{1}{2}
         \end{array}\mid-1\right)=\frac{\Gamma(1+\frac{1}{2})\Gamma(1+\frac{3}{8})}{\Gamma(1+\frac{3}{4})
         \Gamma(1+\frac{1}{8})}.
\end{align}    
Classical hypergeometric series with dihedral monodromy group can be expressed as elementary functions as their hypergeometric equations can be reformulated to Fuchsian equations with cyclic monodromy groups. For example, two interesting cases that can be expressed as square roots inside powers are:
\begin{align}\label{dg-1}
{_2F_1}\left(\begin{array}{cc}
\frac{a}{2}, & \frac{a+1}{2}\\
~& a+1
\end{array}\mid z\right)=\left(\frac{1+\sqrt{1-z}}{2}\right)^{-a},
\end{align}   
and 
\begin{align}\label{dg-2}
{_2F_1}\left(\begin{array}{cc}
\frac{a}{2}, & \frac{a+1}{2}\vspace{1mm}\\
~& \frac{1}{2}
\end{array}\mid z\right)=\frac{(1-\sqrt{z})^{-a}+(1+\sqrt{z})^{-a}}{2}.
\end{align}      
All these evaluations of Gauss, Kummer etc. motivates us to study the special values of $p$-adic hypergeometric series ${_2G_2}[\cdots].$ Indeed, we completely determine all the possible zeros and non zero values of a certain  family of ${_2G_2}[\cdots].$ We first discuss a theorem that classify all the zeros and non zero values of the function $_2G_2[\cdots].$
           For brevity we write $a\neq\square$ if $a$ is not square in $\mathbb{F}_p.$
\begin{theorem}\label{SV-1}
Let $p\geq3$ be a prime and $t\in\mathbb{F}_p^\times.$  Then we have the following values.
\begin{enumerate}
\item \begin{align}\label{value-1}
{_2G_2}\left[\begin{array}{cc}
\frac{1}{4}, & \frac{3}{4}\vspace{1mm}\\
0, & \frac{1}{2}
\end{array}\mid 1\right]=\frac{-\varphi(-1)}{\Gamma_p(\frac{1}{4})\Gamma_p(\frac{3}{4})}.
\end{align}
\item Let $t\neq1$ and $\frac{t-1}{t}$ be a square in $\mathbb{F}_p^\times$ such that 
$\frac{t-1}{t}=a^2$ for some $a\in\mathbb{F}_p^\times.$ Then we have
\begin{align}\label{value-2}
{_2G_2}\left[\begin{array}{cc}
\frac{1}{4}, & \frac{3}{4}\vspace{1mm}\\
0, & \frac{1}{2}
\end{array}\mid t\right]=\frac{-\varphi(-1)}{\Gamma_p(\frac{1}{4})\Gamma_p(\frac{3}{4})}
(\varphi(1+a)+\varphi(1-a)).
\end{align}
\item Also, if $\frac{t-1}{t}\neq\square$ in $\mathbb{F}_p$ then 
\begin{align}\label{value-3}
{_2G_2}\left[\begin{array}{cc}
\frac{1}{4}, & \frac{3}{4}\vspace{1mm}\\
0, & \frac{1}{2}
\end{array}\mid t\right]=0.
\end{align}
\end{enumerate}
\end{theorem}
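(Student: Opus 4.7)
The strategy is to reduce the evaluation to a finite character sum via the transformation formulas of McCarthy \cite[Lemma 3.3]{mccarthy-pacific} and \cite[Proposition 2.5]{mccarthy-ffa}, together with the Gross--Koblitz formula, and then to evaluate that sum using a finite-field avatar of the classical dihedral identity \eqref{dg-2} with parameter $a=\frac{1}{2}$. Concretely, I would first rewrite
\begin{align*}
{_2G_2}\left[\begin{array}{cc}\frac{1}{4}, & \frac{3}{4}\vspace{1mm}\\ 0, & \frac{1}{2}\end{array}\mid t\right] = \frac{-\varphi(-1)}{\Gamma_p(\frac{1}{4})\Gamma_p(\frac{3}{4})}\cdot S(t),
\end{align*}
where $S(t)$ is a Jacobi-sum expression obtained by grouping the $\Gamma_p$ quotients in Definition \ref{defin1} into Gauss sums. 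For primes $p\equiv 1\pmod 4$ the sum $S(t)$ is essentially the Greene hypergeometric $_2F_1(\chi_4,\chi_4^3;\varphi\mid 1/t)$; for $p\equiv 3\pmod 4$ the order-four character does not exist on $\widehat{\mathbb{F}_p^\times}$, but the underlying Jacobi sums over the indices $\langle\frac{1}{4}-\frac{a}{p-1}\rangle$ and $\langle\frac{3}{4}-\frac{a}{p-1}\rangle$ remain meaningful, so both residue classes can be treated uniformly.

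The second step is to evaluate $S(t)$. I plan to use a direct character-sum manipulation mirroring the classical proof of \eqref{dg-2} at $a=\tfrac12$: after a substitution of the form $y\mapsto y^2$ the inner sum collapses, by orthogonality of the quadratic character, to the quantity $\varphi(1-a)+\varphi(1+a)$ whenever $(t-1)/t=a^2$ has a square root $a\in\mathbb{F}_p^\times$, which establishes case (2). For case (3), when $(t-1)/t$ is a non-square, the same substitution pairs summands into $\{y,-y\}$ contributions that cancel under $\varphi$, forcing $S(t)=0$. Case (1) at $t=1$ is a degeneration where $a=0$ and the two branches $\pm a$ coincide, so only a single contribution survives in the sum; reading off the resulting value produces the predicted constant $\frac{-\varphi(-1)}{\Gamma_p(1/4)\Gamma_p(3/4)}$ without an extra factor of two.

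The principal obstacle will be carrying out the reduction to a character sum uniformly for all odd primes. When $p\equiv 3\pmod 4$ there is no direct Greene-type identity to invoke, so one must manipulate the $p$-adic expression by hand: tracking the exponents $\lfloor\langle a_k\rangle-\tfrac{a}{p-1}\rfloor$ and $\lfloor\langle -b_k\rangle+\tfrac{a}{p-1}\rfloor$ in Definition \ref{defin1}, and collapsing the products $\Gamma_p(\langle\tfrac14-\tfrac{a}{p-1}\rangle)\,\Gamma_p(\langle\tfrac34+\tfrac{a}{p-1}\rangle)$ via the reflection formula $\Gamma_p(x)\Gamma_p(1-x)=\pm 1$. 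A related delicate point is the appearance of the global factor $\varphi(-1)$, which equals $+1$ or $-1$ according to $p\bmod 4$; tracking this sign through the $(-p)^{-\lfloor\cdots\rfloor}$ contributions so that it appears with the correct placement in the final formula will require the most careful bookkeeping.
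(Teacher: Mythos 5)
Your overall strategy is the one the paper actually uses: convert the $_2G_2$ via the Gross--Koblitz formula into the character sum $\sum_{\chi}g(\varphi\chi^2)g(\varphi\overline{\chi})g(\overline{\chi})\chi(x/4)$ (the paper's Propositions \ref{prop-1} and \ref{prop-2} compute this one quantity two ways), and then evaluate that sum by orthogonality, which reduces it to $\varphi$ evaluated at the roots $\tfrac{1}{2}(1\pm a)$ of $4y^2-4y+x=0$; your treatment of the degenerate double root at $t=1$ is also exactly right.

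There is, however, one concrete misstep in your plan for the reduction step, and it sits precisely at the point you correctly identify as the principal obstacle. The gamma factors that actually occur in Definition \ref{defin1} for the top parameters $\tfrac14,\tfrac34$ are $\Gamma_p\bigl(\bigl\langle\tfrac14-\tfrac{a}{p-1}\bigr\rangle\bigr)\,\Gamma_p\bigl(\bigl\langle\tfrac34-\tfrac{a}{p-1}\bigr\rangle\bigr)$, both with the \emph{same} sign on $\tfrac{a}{p-1}$; the pair you propose to collapse, $\Gamma_p\bigl(\bigl\langle\tfrac14-\tfrac{a}{p-1}\bigr\rangle\bigr)\,\Gamma_p\bigl(\bigl\langle\tfrac34+\tfrac{a}{p-1}\bigr\rangle\bigr)$, is not the one that appears, and the reflection formula \eqref{prod-3} cannot combine the pair that does appear. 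The correct tool is the duplication formula, i.e. \eqref{prod-1} with $m=2$, which gives
\begin{align*}
\Gamma_p\left(\left\langle\frac{1}{4}-\frac{a}{p-1}\right\rangle\right)
\Gamma_p\left(\left\langle\frac{3}{4}-\frac{a}{p-1}\right\rangle\right)
=\Gamma_p\left(\tfrac{1}{2}\right)\varphi(2)\,\overline{\omega}^a(4)\,
\Gamma_p\left(\left\langle\frac{1}{2}-\frac{2a}{p-1}\right\rangle\right),
\end{align*}
together with the matching floor identity $\lfloor\tfrac12-\tfrac{2a}{p-1}\rfloor=\lfloor\tfrac14-\tfrac{a}{p-1}\rfloor+\lfloor\tfrac34-\tfrac{a}{p-1}\rfloor$. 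This is not a cosmetic substitution: it is exactly what converts the quarter-integer parameters into the Gauss sum $g(\varphi\omega^{2a})$, so that the resulting character sum involves only $\varphi$ and $\chi^2$ and therefore makes sense for every odd prime --- this is how the paper avoids any quartic character and gets a statement uniform in $p\bmod 4$. With the reflection formula in place of the duplication formula your reduction would stall, so you should replace that step; once you do, the rest of your argument (orthogonality, the quadratic $4y^2-4y+x=0$, and the sign bookkeeping for $\varphi(-1)$) goes through as in the paper.
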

\begin{remark}
Note that \eqref{value-1} can be described as a $p$-adic analogue of \eqref{gauss-value}. Theorem \ref{SV-1} provides $p$-adic analogue of \eqref{kummer-value}. The value of the function ${_2G_2}\left[\begin{array}{cc}
\frac{1}{4}, & \frac{3}{4}\\
0, & \frac{1}{2}
\end{array}\mid -1\right]$ completely depends on the prime as if $p\equiv\pm3\pmod{8}$ then it will be equal to zero. However, \eqref{kummer-value} cannot be equal to zero.
Theorem \ref{SV-1} can also be described as $p$-adic analogue of \eqref{dg-1} and \eqref{dg-2} for $a=\frac{1}{2}.$
\end{remark}

Another purpose of this paper is to establish $p$-adic analogous of the Kummer's linear transformation 
 \cite[p. 4 eq. (1)]{bailey} given below. 
\begin{align}\label{kummer-transformation}
&{_2F_1}\left(\begin{array}{cc}
           a, & b \\
           & c
         \end{array}\mid z\right)=\frac{\Gamma(c)\Gamma(c-a-b)}{\Gamma(c-a)\Gamma(c-b)}\cdot
         {_2F_1}\left(\begin{array}{cc}
           a, & b \\
           & 1+a+b-c
         \end{array}\mid1-z\right)\\
         &+\frac{\Gamma(c)\Gamma(a+b-c)}{\Gamma(a)\Gamma(b)}(1-z)^{c-a-b}\cdot
         {_2F_1}\left(\begin{array}{cc}
           c-a, & c-b \\
           & 1+c-a-b
         \end{array}\mid1-z\right).\notag
\end{align} 
The next theorem provides transformations of $p$-adic hypergeometric series which can be described as $p$-adic analogue of  a particular case of Kummer's linear transformation \eqref{kummer-transformation}. This theorem is obtained as a  consequence of Theorem \ref{SV-1}.
\begin{theorem}\label{kummer-1}
Let $p\geq3$ be a prime and $x\in\mathbb{F}_p$ be such that $x\neq0,1$. Then we have the followings:
\begin{enumerate}
\item If $x$ and $1-x$ are not squares in $\mathbb{F}_p$ then
\begin{align}\label{trans-1}
{_2G_2}\left[\begin{array}{cc}
\frac{1}{4}, & \frac{3}{4}\vspace{1mm}\\
0, & \frac{1}{2}
\end{array}\mid\frac{1}{x}\right]={_2G_2}\left[\begin{array}{cc}
\frac{1}{4}, & \frac{3}{4}\vspace{1mm}\\
0, & \frac{1}{2}
\end{array}\mid\frac{1}{1-x}\right].
\end{align}
\item If $x=b^2$ for some $b\in\mathbb{F}_p$ and $1-x$ is not a square in $\mathbb{F}_p$ then 
\begin{align}\label{trans-2}
{_2G_2}\left[\begin{array}{cc}
\frac{1}{4}, & \frac{3}{4}\vspace{1mm}\\
0, & \frac{1}{2}
\end{array}\mid\frac{1}{x}\right]&=\Gamma_p\left(\frac{1}{4}\right)
\Gamma_p\left(\frac{3}{4}\right)
{_2G_2}\left[\begin{array}{cc}
\frac{1}{4}, & \frac{3}{4}\vspace{1mm}\\
0, & \frac{1}{2}
\end{array}\mid\frac{1}{1-x}\right]\\
&+\varphi(-1)(\varphi(1+b)+\varphi(1-b))\notag.
\end{align}
\item If $x,$ and $1-x$ are both squares such that $1-x=a^2$ and $x=b^2$ for some $a,b\in\mathbb{F}_p$ then
\begin{align}\label{trans-3}
&(\varphi(1+b)+\varphi(1-b)){_2G_2}\left[\begin{array}{cc}
\frac{1}{4}, & \frac{3}{4}\vspace{1mm}\\
0, & \frac{1}{2}
\end{array}\mid\frac{1}{x}\right]\notag\\
&=(\varphi(1+a)+\varphi(1-a)){_2G_2}\left[\begin{array}{cc}
\frac{1}{4}, & \frac{3}{4}\vspace{1mm}\\
0, & \frac{1}{2}
\end{array}\mid\frac{1}{1-x}\right].
\end{align} 
\item If $1-x=a^2$ for some $a\in\mathbb{F}_p$ and $x$ is not a square in $\mathbb{F}_p$ then
\begin{align}\label{trans-4}
&\Gamma_p\left(\frac{1}{4}\right)
\Gamma_p\left(\frac{3}{4}\right){_2G_2}\left[\begin{array}{cc}
\frac{1}{4}, & \frac{3}{4}\vspace{1mm}\\
0, & \frac{1}{2}
\end{array}\mid\frac{1}{x}\right]\notag\\
&={_2G_2}\left[\begin{array}{cc}
\frac{1}{4}, & \frac{3}{4}\vspace{1mm}\\
0, & \frac{1}{2}
\end{array}\mid\frac{1}{1-x}\right]
-\varphi(-1)(\varphi(1+a)+\varphi(1-a)).
\end{align}
\end{enumerate} 
\end{theorem}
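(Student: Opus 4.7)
The plan is to deduce Theorem~\ref{kummer-1} directly from Theorem~\ref{SV-1}. The crucial observation is that the quantity $(t-1)/t$, which governs which branch of Theorem~\ref{SV-1} applies, satisfies $(t-1)/t = 1-x$ when $t = 1/x$ and $(t-1)/t = x$ when $t = 1/(1-x)$. Hence the value of ${_2G_2}[\cdots \mid 1/x]$ is controlled by the square class of $1-x$ in $\mathbb{F}_p^\times$, while the value of ${_2G_2}[\cdots \mid 1/(1-x)]$ is controlled by the square class of $x$; this matches exactly the four-case split in the statement.

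With this in hand I would verify each of the four identities by substitution. For part~(1), both $x$ and $1-x$ are non-squares, so \eqref{value-3} forces both sides of \eqref{trans-1} to vanish. For part~(2), $x = b^2$ but $1-x$ is non-square: \eqref{value-3} makes the left-hand $_2G_2$ vanish, while \eqref{value-2} (with $t = 1/(1-x)$ and the square-root parameter replaced by $b$) evaluates the right-hand $_2G_2$ to $\tfrac{-\varphi(-1)(\varphi(1+b)+\varphi(1-b))}{\Gamma_p(1/4)\Gamma_p(3/4)}$. Plugging these into \eqref{trans-2}, the $\Gamma_p$-factors cancel and the two remaining terms on the right side cancel as well. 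Part~(4) is the exact mirror of this: \eqref{value-3} kills the right-hand $_2G_2$, \eqref{value-2} evaluates the left-hand one, and the residual term on the right of \eqref{trans-4} matches. Finally, in part~(3) both sides are nontrivial: \eqref{value-2} gives each $_2G_2$ a prefactor $\varphi(1\pm a)$ or $\varphi(1\pm b)$ multiplied by the common constant $-\varphi(-1)/(\Gamma_p(1/4)\Gamma_p(3/4))$, so cross-multiplying by $\varphi(1+b)+\varphi(1-b)$ and $\varphi(1+a)+\varphi(1-a)$ yields \eqref{trans-3} at once.

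There is really no analytic obstacle here, since all the work has already been done in Theorem~\ref{SV-1}. The only \emph{bookkeeping} subtlety is to keep track of which of $x$ and $1-x$ controls which side through the map $t \mapsto (t-1)/t$, and to substitute the correct symbol ($a$ or $b$) when applying \eqref{value-2}. The hypothesis $x \neq 0, 1$ rules out every degenerate case (in particular neither $1/x$ nor $1/(1-x)$ equals $0$ or $1$, and the possible square roots $a,b$ avoid the values $0,\pm 1$), so Theorem~\ref{SV-1} applies throughout.
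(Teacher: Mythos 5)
Your proposal is correct and follows essentially the same route as the paper: both arguments specialize Theorem \ref{SV-1} at $t=1/x$ and $t=1/(1-x)$, noting that $(t-1)/t$ becomes $1-x$ and $x$ respectively, and then check the four identities case by case (the two degenerate cases reducing to $0=0$ after the $\Gamma_p$-factors cancel). Your closing remark that $x\neq 0,1$ forces $a,b\neq 0,\pm 1$ is a correct and harmless observation, and nothing further is missing.
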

Note that the finite field analogue of Theorem \ref{kummer-1} involving characters of order 4 follows from
Greene's evaluation \cite[Theorem 4.4 (i)]{greene} and if we use this result of Greene along with the relations \cite[Lemma 3.3]{mccarthy-pacific}, and \cite[Proposition 2.5]{mccarthy-ffa} then we also obtain a similar transformation for the $p$-adic hypergeometric series ${_2G_2}\left[\begin{array}{cc}
\frac{1}{4}, & \frac{3}{4}\vspace{1mm}\\
0, & \frac{1}{2}
\end{array}\mid t\right]$ under the condition that $p\equiv1\pmod{4}.$ However, Theorem \ref{kummer-1} has no congruence condition on primes.
\par Fuselier-McCarthy \cite{fm} evaluated certain  summation identities for $p$-adic hypergeometric series. This motivates us to study summation identities of $p$-adic hypergeometric series.
\begin{theorem}\label{sum-1}
Let $p\geq3$ be a prime. Let $x\in\mathbb{F}_p^{\times}.$ Then we have the following:
\begin{enumerate}
\item \begin{align}\label{formula-1}
\sum_{t\in\mathbb{F}_p^\times}\varphi(t(t-1))~{_2G_2}\left[\begin{array}{cc}
\frac{1}{4}, & \frac{3}{4}\vspace{1mm}\\
0, & 0
\end{array}\mid t\right]=-1+\frac{p\varphi(-1)}{\Gamma_p(\frac{1}{4})\Gamma_p(\frac{3}{4})}.
\end{align}
\item If $x\neq1$ and $1-x$ is not a square in $\mathbb{F}_p$ then we have
\begin{align}\label{formula-2}
\sum_{t\in\mathbb{F}_p^\times}\varphi(t(t-1))~{_2G_2}\left[\begin{array}{cc}
\frac{1}{4}, & \frac{3}{4}\vspace{1mm}\\
0, & 0
\end{array}\mid\frac{t}{x}\right]=-1.
\end{align}
\item If $x\neq1$ and $1-x=a^2$ for some $a\in\mathbb{F}_p$ then
\begin{align}\label{formula-3}
\sum_{t\in\mathbb{F}_p^\times}\varphi(t(t-1))~{_2G_2}\left[\begin{array}{cc}
\frac{1}{4}, & \frac{3}{4}\vspace{1mm}\\
0, & 0
\end{array}\mid\frac{t}{x}\right]=-1+\frac{p\varphi(-1)(\varphi(1+a)+\varphi(1-a))}
{\Gamma_p(\frac{1}{4})\Gamma_p(\frac{3}{4})}.
\end{align}
\end{enumerate}
\end{theorem}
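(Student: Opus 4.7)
The plan is to establish a master identity of the form
\begin{align*}
\sum_{t\in\mathbb{F}_p^\times}\varphi(t(t-1))\, {_2G_2}\!\left[\begin{array}{cc}\frac{1}{4}, & \frac{3}{4}\\ 0, & 0\end{array}\mid \frac{t}{x}\right] = -1 - p\cdot {_2G_2}\!\left[\begin{array}{cc}\frac{1}{4}, & \frac{3}{4}\\ 0, & \frac{1}{2}\end{array}\mid \frac{1}{x}\right],
\end{align*}
valid for all $x\in\mathbb{F}_p^\times$ (with case (1) being the $x=1$ instance, where $t/x$ reduces to $t$). Once this identity is proved, the three parts of Theorem \ref{sum-1} are immediate from Theorem \ref{SV-1} applied at the argument $1/x$: since $\frac{(1/x)-1}{1/x}=1-x$, the trichotomy in Theorem \ref{SV-1} (argument equal to $1$; $(t-1)/t$ a nonzero square; $(t-1)/t$ not a square) matches the three cases here ($x=1$; $1-x$ a nonzero square; $1-x$ not a square).

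To prove the master identity, I would first expand the inner ${_2G_2}[\frac14,\frac34;0,0|t/x]$ using Definition \ref{defin1} with $n=2$; note that $(-1)^{2a}=1$, and the only $t$-dependence lies in $\bar\omega^a(t/x)=\omega^a(x)\bar\omega^a(t)$. Interchanging the finite sums over $t$ and $a$, the inner character sum becomes
\begin{align*}
\omega^a(x)\sum_{t\in\mathbb{F}_p}\varphi(t)\varphi(t-1)\bar\omega^a(t) = \omega^a(x)\,\varphi(-1)\,J(\varphi\bar\omega^a,\varphi),
\end{align*}
after using $\varphi(t-1)=\varphi(-1)\varphi(1-t)$ and the definition \eqref{jacobi}. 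For indices $a$ at which both $\varphi\bar\omega^a$ and $\bar\omega^a$ are nontrivial, I would write $J(\varphi\bar\omega^a,\varphi)=g(\varphi\bar\omega^a)g(\varphi)/g(\bar\omega^a)$ and apply the Gross--Koblitz formula to each Gauss sum. The resulting quotient of $p$-adic gamma values is precisely what is needed to promote the $b_2=0$ entry of ${_2G_2}[\frac14,\frac34;0,0|\cdot]$ into the $b_2=\frac12$ entry of ${_2G_2}[\frac14,\frac34;0,\frac12|\cdot]$. A re-indexing $a\mapsto p-1-a$ then converts $\omega^a(x)$ into $\bar\omega^a(1/x)$, so that the surviving indices reassemble into $-p\cdot {_2G_2}[\frac14,\frac34;0,\frac12|1/x]$.

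The two degenerate indices $a=0$ and $a=(p-1)/2$ must be handled by hand, since in each case one of the Gauss sums in $g(\varphi\bar\omega^a)g(\varphi)/g(\bar\omega^a)$ becomes trivial and the Gross--Koblitz step above does not apply verbatim. At these indices the Jacobi sums evaluate directly to $J(\varphi,\varphi)=-\varphi(-1)$ and $J(\varepsilon,\varphi)=-1$, and their combined contribution, weighted by the prefactor $\frac{-1}{p-1}$ from Definition \ref{defin1}, will account for the additive constant $-1$ on the right-hand side of the master identity. The main technical obstacle is exactly this bookkeeping: tracking the sign $\varphi(-1)$, the floor-function exponents of $-p$ produced in parallel by Definition \ref{defin1} and by Gross--Koblitz, and verifying that under $a\mapsto p-1-a$ the $a_1=\frac14$ and $a_2=\frac34$ factors are preserved in the exact shape required by the right-hand ${_2G_2}$.
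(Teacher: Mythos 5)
Your proposal is correct, and your ``master identity''
\begin{align*}
\sum_{t\in\mathbb{F}_p^\times}\varphi(t(t-1))\, {_2G_2}\!\left[\begin{array}{cc}\frac{1}{4}, & \frac{3}{4}\vspace{1mm}\\ 0, & 0\end{array}\mid \frac{t}{x}\right] = -1 - p\; {_2G_2}\!\left[\begin{array}{cc}\frac{1}{4}, & \frac{3}{4}\vspace{1mm}\\ 0, & \frac{1}{2}\end{array}\mid \frac{1}{x}\right]
\end{align*}
is exactly what the paper proves implicitly: its equation \eqref{eq-1} together with Proposition \ref{prop-2} gives $(p-1)(1+S)=-p(p-1)G$, i.e.\ $S=-1-pG$ in your notation. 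The ingredients are the same on both sides --- Gross--Koblitz plus the observation that $\varphi(-1)J(\varphi\overline{\omega}^a,\varphi)$ is precisely the factor converting the lower parameter $0$ into the lower parameter $\frac12$ (this is the content of the paper's \eqref{lemma-2}) --- but the organization is genuinely different. The paper never isolates the master identity: it starts from the triple Gauss sum $A=\sum_{\chi}g(\varphi\chi^2)g(\varphi\overline{\chi})g(\overline{\chi})\chi(x/4)$, rewrites it via Gross--Koblitz, inserts \eqref{lemma-1} and \eqref{lemma-2} to manufacture the sum over $t$, and then compares with the elementary evaluation of $A$ in Proposition \ref{prop-1}. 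You instead expand the left-hand $_2G_2$ directly, swap the $t$- and $a$-sums, evaluate the inner sum as a Jacobi sum, and then quote Theorem \ref{SV-1}; this makes the intermediate identity explicit and reusable, at the cost of redoing some of the floor-exponent bookkeeping that the paper inherits for free from Proposition \ref{prop-2}. Two small corrections to your sketch: the re-indexing $a\mapsto p-1-a$ is unnecessary, since $\omega^a(x)=\overline{\omega}^a(1/x)$ already produces the argument $1/x$ in the target $_2G_2$ (and the substitution would in fact disturb the $a_k$-factors); and of your two ``degenerate'' indices only $a=0$ actually deviates from the generic Gauss-sum computation (contributing the $-1$), since at $a=(p-1)/2$ one has $J(\varepsilon,\varphi)=g(\varepsilon)g(\varphi)/g(\varphi)=-1$ exactly and $\Gamma_p(\frac12)^2=-\varphi(-1)$ makes the generic formula persist. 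Neither point affects the validity of the argument.
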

The following theorem gives a summation identity of Gaussian hypergeometric functions involving characters of order 4.
\begin{theorem}\label{sum-3}
Let $p\geq3$ be a prime such that $p\equiv1\pmod{4}$. Let $x\in\mathbb{F}_p^{\times}$ and $\chi_4$ be a multiplicative character of $\mathbb{F}_p$ of order $4.$ Then we have the following.
\begin{enumerate}
\item \begin{align}\label{formula-4}
\sum_{t\in\mathbb{F}_p^\times}\varphi(1-t)~{_2F_1}\left(\begin{array}{cc}
\chi_4, & \chi_4^3\\
 & \varepsilon
\end{array}\mid t\right)=\frac{1}{p}+\chi_4(-1).
\end{align}
\item If $x\neq1$ and $1-x$ is not a square in $\mathbb{F}_p$ then we have
\begin{align}\label{formula-5}
\sum_{t\in\mathbb{F}_p^\times}\varphi(x-t)~{_2F_1}\left(\begin{array}{cc}
\chi_4, & \chi_4^3\\
 & \varepsilon
\end{array}\mid t\right)=\frac{\varphi(x)}{p}.
\end{align}
\item If $x\neq1$ and $1-x=a^2$ for some $a\in\mathbb{F}_p^{\times}$ then
\begin{align}\label{formula-6}
\sum_{t\in\mathbb{F}_p^\times}\varphi(x-t)~{_2F_1}\left(\begin{array}{cc}
\chi_4, & \chi_4^3\\
 & \varepsilon
\end{array}\mid t\right)=\frac{\varphi(x)}{p}+\varphi(x)\chi_4(-1)(\varphi(1+a)+\varphi(1-a)).
\end{align}
\end{enumerate}
\end{theorem}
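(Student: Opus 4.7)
The plan is to reduce Theorem \ref{sum-3} to Theorem \ref{sum-1} via the dictionary between Greene's Gaussian hypergeometric functions and McCarthy's $p$-adic ${_nG_n}[\cdots]$ functions, a translation that is available under our hypothesis $p \equiv 1 \pmod{4}$ precisely because the quartic characters $\chi_4 = \omega^{(p-1)/4}$ and $\chi_4^3 = \omega^{3(p-1)/4}$ then exist on $\mathbb{F}_p^\times$. As indicated in the introduction, the natural correspondence matches $\chi_4, \chi_4^3$ with the upper entries $\frac{1}{4}, \frac{3}{4}$ and matches the lower character $\varepsilon$ of $_2F_1$ with the lower entry $0$ of $_2G_2$; it also typically inverts the argument $t \leftrightarrow 1/t$.

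My first step would be to invoke \cite[Lemma 3.3]{mccarthy-pacific} together with \cite[Proposition 2.5]{mccarthy-ffa} to derive an explicit identity of the form
\begin{align*}
{_2F_1}\left(\begin{array}{cc}\chi_4, & \chi_4^3\\ & \varepsilon\end{array}\mid u\right)=\mu(u)\cdot{_2G_2}\left[\begin{array}{cc}\tfrac{1}{4}, & \tfrac{3}{4}\\ 0, & 0\end{array}\mid \tfrac{1}{u}\right]+\nu(u),
\end{align*}
where $\mu(u)$ is an explicit factor depending on $\varphi(u)$, $\chi_4(-1)$, and $\Gamma_p(\tfrac{1}{4})\Gamma_p(\tfrac{3}{4})$, and $\nu(u)$ is a correction term supported on a small exceptional set (notably $u=1$, which will account for the $\chi_4(-1)$ summand in \eqref{formula-4}).

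Next, I would substitute this expression into the left-hand sides of \eqref{formula-4}--\eqref{formula-6} and make the substitution $u=x/t$, which is a bijection of $\mathbb{F}_p^\times$ satisfying $x-u=\tfrac{x(t-1)}{t}$, so that $\varphi(x-u)=\varphi(x)\varphi(t(t-1))$. Under this substitution, $1/u$ becomes $t/x$, so each sum in Theorem \ref{sum-3} transforms (after collecting the factor $\mu$) into a constant multiple of the corresponding sum in Theorem \ref{sum-1}, plus boundary contributions from $\nu(u)$. The hypothesis that $1-x$ is or is not a square is preserved under $u \mapsto x/t$, so parts (1), (2), (3) of Theorem \ref{sum-3} correspond respectively to parts (1), (2), (3) of Theorem \ref{sum-1}.

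The main obstacle will be the bookkeeping for the conversion factor $\mu$ and the exceptional term $\nu$. In particular, producing the constants $\tfrac{1}{p}+\chi_4(-1)$ in \eqref{formula-4} and $\chi_4(-1)(\varphi(1+a)+\varphi(1-a))$ in \eqref{formula-6} with the correct normalization requires carefully tracking the $-\tfrac{1}{p-1}$ and $\tfrac{p}{p-1}$ prefactors in the two hypergeometric definitions, and controlling how the Teichm\"uller relation $\chi_4 = \omega^{(p-1)/4}$ interacts with the Gross--Koblitz-type expression of $\chi_4$-Gauss sums in terms of $\Gamma_p(\tfrac{1}{4})\Gamma_p(\tfrac{3}{4})$. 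Once this dictionary is pinned down, each part of Theorem \ref{sum-3} follows directly from the corresponding part of Theorem \ref{sum-1} by the change of variables above.
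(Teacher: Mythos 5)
Your proposal follows essentially the same route as the paper's proof: the paper invokes exactly the two transformations you cite to obtain the clean identity ${_2G_2}[\tfrac{1}{4},\tfrac{3}{4};0,0\mid t/x]={\chi_4^3\choose\varepsilon}^{-1}{_2F_1}(\chi_4,\chi_4^3;\varepsilon\mid x/t)=-p\,{_2F_1}(\chi_4,\chi_4^3;\varepsilon\mid x/t)$ and then substitutes into Theorem \ref{sum-1} with your change of variables $t\mapsto x/t$, using $\varphi(x-t)$ versus $\varphi(t(t-1))$ exactly as you describe. The only correction to your sketch is that the dictionary is simpler than you anticipate: $\mu$ is the constant $-p$ and there is no exceptional term $\nu$; the $\chi_4(-1)$ summands in \eqref{formula-4} and \eqref{formula-6} arise not from a boundary contribution at $u=1$ but from rewriting $\Gamma_p(\tfrac{1}{4})\Gamma_p(\tfrac{3}{4})=-\chi_4(-1)$ (via \eqref{prod-3}, using $p\equiv1\pmod 4$ so that $\varphi(-1)=1$) in the constants already present on the right-hand sides of Theorem \ref{sum-1}.
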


Apart from Kummer's transformation there are other interesting transformation formulas exist in the literature. For example, Euler \cite[p. 10]{slater}, Whipple \cite{whipple}, Dixon \cite{dixon} studied transformation properties of classical hypergeometric series. However, we are interested in the Pfaff's transformation \cite[p. 31]{slater} 
\begin{align}
{_2F_1}\left(\begin{array}{cc}
a, & b\\
~ & c
\end{array}\mid z\right)=(1-z)^{-a}{_2F_1}\left(\begin{array}{cc}
a, & c-b\\
~ & c
\end{array}\mid \frac{z}{z-1}\right).\notag
\end{align}
In particular, if $a=\frac{1}{4}$, $b=\frac{3}{4}$, and $c=\frac{1}{2}$ then the above result gives
\begin{align}\label{pfaff}
{_2F_1}\left(\begin{array}{cc}
\frac{1}{4}, & \frac{3}{4}\vspace{1mm}\\
~ & \frac{1}{2}
\end{array}\mid z\right)=(1-z)^{-1/4}{_2F_1}\left(\begin{array}{cc}
\frac{1}{4}, & \frac{-1}{4}\\
~ & \frac{1}{2}
\end{array}\mid \frac{z}{z-1}\right).
\end{align}
We know that $p$-adic analogue of ${_2F_1}\left(\begin{array}{cc}
\frac{1}{4}, & \frac{-1}{4}\vspace{1mm}\\
~ & \frac{1}{2}
\end{array}\mid z\right)$ can be described as the function ${_2G_2}\left[\begin{array}{cc}
\frac{1}{4}, & \frac{-1}{4}\vspace{1mm}\\
0, & \frac{1}{2}
\end{array}\mid\frac{1}{z}\right]={_2G_2}\left[\begin{array}{cc}
\frac{1}{4}, & 1-\frac{1}{4}\vspace{1mm}\\
0, & \frac{1}{2}
\end{array}\mid\frac{1}{z}\right].$ Then a $p$-adic analogue of Pfaff's transformation \eqref{pfaff} is described in the next theorem. 
\begin{theorem}\label{paf}
Let $p\geq3$ be a prime and $1\neq x\in\mathbb{F}_p^\times$. Then we have the followings.
\begin{enumerate}
\item If $1-x\neq\square$ then 
\begin{align}\label{paf-1}
{_2G_2}\left[\begin{array}{cc}
\frac{1}{4}, & \frac{3}{4}\vspace{1mm}\\
0, & \frac{1}{2}
\end{array}\mid\frac{1}{x}\right]={_2G_2}\left[\begin{array}{cc}
\frac{1}{4}, & \frac{3}{4}\vspace{1mm}\\
0, & \frac{1}{2}
\end{array}\mid\frac{x-1}{x}\right].
\end{align} 
\item If $1-x=a^2$ for some $a\in\mathbb{F}_p^\times$ then 
\begin{align}\label{paf-2}
&\varphi(a)(\varphi(a+1)+\varphi(a-1)){_2G_2}\left[\begin{array}{cc}
\frac{1}{4}, & \frac{3}{4}\vspace{1mm}\\
0, & \frac{1}{2}
\end{array}\mid\frac{1}{x}\right]\notag\\
&=(\varphi(1+a)+\varphi(1-a)){_2G_2}\left[\begin{array}{cc}
\frac{1}{4}, & \frac{3}{4}\vspace{1mm}\\
0, & \frac{1}{2}
\end{array}\mid\frac{x-1}{x}\right].
\end{align} 
\end{enumerate}
\end{theorem}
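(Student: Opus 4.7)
The plan is to deduce Theorem \ref{paf} directly from the value classification in Theorem \ref{SV-1}. The key observation is that $(x-1)/x = 1 - 1/x$, so setting $t = 1/x$ the second argument is $1-t$. Computing the input to the square/non-square dichotomy of Theorem \ref{SV-1}, one finds $(t-1)/t = 1-x$ when $t = 1/x$, and $(t-1)/t = 1/(1-x)$ when $t = (x-1)/x$. Since the squares in $\mathbb{F}_p^\times$ form a subgroup closed under inversion, whether $(t-1)/t$ is a square is in both cases equivalent to whether $1-x$ is a square.

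For part (1), when $1-x$ is not a square neither is $1/(1-x)$, so Theorem \ref{SV-1}(3) forces both ${_2G_2}$-values in (\ref{paf-1}) to be zero and the identity is immediate.

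For part (2), write $1-x = a^2$, so that $1/(1-x) = (1/a)^2$. Applying Theorem \ref{SV-1}(2) at $t = 1/x$ (with square root $a$) and at $t = (x-1)/x$ (with square root $1/a$) expresses both ${_2G_2}$-values as $-\varphi(-1)/[\Gamma_p(\tfrac{1}{4})\Gamma_p(\tfrac{3}{4})]$ times a factor of the form $\varphi(1+\cdot)+\varphi(1-\cdot)$. The only genuine computation is to simplify $\varphi(1\pm 1/a)$ using multiplicativity of $\varphi$ together with $\varphi(1/a)=\varphi(a)$, giving $\varphi(1\pm 1/a) = \varphi(a)\varphi(a\pm 1)$. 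Substituting these expressions, both sides of the target identity (\ref{paf-2}) are seen to equal the common value
\begin{align*}
\frac{-\varphi(-1)}{\Gamma_p(\tfrac{1}{4})\Gamma_p(\tfrac{3}{4})}\,\varphi(a)\bigl(\varphi(a+1)+\varphi(a-1)\bigr)\bigl(\varphi(1+a)+\varphi(1-a)\bigr).
\end{align*}

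No significant obstacle is expected: the entire argument is essentially a rewriting once the $t\leftrightarrow 1-t$ symmetry is recognized. The only care needed is in the non-degeneracy bookkeeping, namely that $x\in\mathbb{F}_p^\times\setminus\{1\}$ guarantees $1/x$ and $(x-1)/x$ both lie in $\mathbb{F}_p^\times$ with $1/x\neq 1$ (while $(x-1)/x\neq 1$ holds automatically), so that each invocation of Theorem \ref{SV-1} lands in the intended case.
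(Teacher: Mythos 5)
Your proposal is correct and follows essentially the same route as the paper: the paper invokes Propositions \ref{prop-1} and \ref{prop-2} directly (with $x$ replaced by $x/(x-1)$ for the second argument, giving the condition $1/(1-x)=\square$ and the value involving $\varphi(1\pm a^{-1})$), which is exactly the content of Theorem \ref{SV-1} that you use. The computation $\varphi(1\pm a^{-1})=\varphi(a)\varphi(a\pm1)$ and the bookkeeping of the degenerate cases match the intended argument.
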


 The rest of this paper is organized as follows. We introduce some basic definitions in Section 2 including Gauss sum and $p$-adic gamma function.  In section 2 we state some results including Hasse-Davenport result, Gross-Koblitz formula. We give the proofs of the main theorems in Section 3.

\section{Notation and Preliminary results}
Let $\overline{\mathbb{Q}_p}$ be the algebraic closure of $\mathbb{Q}_p$ and $\mathbb{C}_p$ be the completion of $\overline{\mathbb{Q}_p}.$ Since each $\chi\in\mathbb{F}_p^\times$ takes values from $\mu_{p-1},$ the group of $(p-1)$-th roots of unity in $\mathbb{C}^\times,$ and  $\mathbb{Z}_p^\times$ contains $\mu_{p-1}$, so   we may assume that the multiplicative characters of $\mathbb{F}_p^\times$ to be mapped 
$\chi:\mathbb{F}_p^{\times}\mapsto\mathbb{Z}_p^\times.$
Recall that $\omega:\mathbb{F}_p^\times\mapsto\mathbb{Z}_p^\times$ is the Teichm$\ddot{u}$ller character. Also,  $\widehat{\mathbb{F}_p^\times}=\{\omega^j:0\leq j\leq p-2\}$ and  $\overline{\omega}$ denotes the inverse of $\omega.$ 

\subsection{Preliminary results on Multiplicative characters and Gauss sums:}
The following result gives the orthogonality relation of multiplicative characters.
\begin{lemma}\cite[Chapter 8]{ireland}
Let $p$ be an odd prime. Then we have
\begin{align}\label{orthogonal-1}
\sum_{\chi\in\widehat{\mathbb{F}_p^\times}}\chi(x)=\left\{
   \begin{array}{ll}
    p-1 , & \hbox{if $x=1$;} \\
  0, & \hbox{if $x\neq1$.}
   \end{array}
 \right.
\end{align}
\end{lemma}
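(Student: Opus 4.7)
The plan is to invoke the standard orthogonality trick for finite abelian groups, splitting on whether $x=1$. Since $\mathbb{F}_p^\times$ is cyclic of order $p-1$, its character group $\widehat{\mathbb{F}_p^\times}$ also has order $p-1$. When $x=1$, every $\chi\in\widehat{\mathbb{F}_p^\times}$ satisfies $\chi(1)=1$, so the sum is simply $p-1$, and the first case follows with no work.

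For the case $x\neq 1$, I would use the ``multiply and shift'' argument. Let $S=\sum_{\chi\in\widehat{\mathbb{F}_p^\times}}\chi(x)$. The goal is to produce a single character $\psi$ with $\psi(x)\neq 1$; once we have it, the map $\chi\mapsto\psi\chi$ is a bijection of $\widehat{\mathbb{F}_p^\times}$, so
\begin{align*}
\psi(x)\,S=\sum_{\chi}\psi(x)\chi(x)=\sum_{\chi}(\psi\chi)(x)=S,
\end{align*}
giving $(\psi(x)-1)S=0$, hence $S=0$.

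The only substantive sub-step is producing $\psi$. I would do this explicitly via the Teichm\"uller character $\omega$ introduced earlier: fix a generator $g$ of $\mathbb{F}_p^\times$ and write $x=g^k$ with $1\leq k\leq p-2$. Then $\omega(g)$ is a primitive $(p-1)$-th root of unity in $\mathbb{Z}_p^\times$, so $\omega(x)=\omega(g)^k\neq 1$ because $k$ is nonzero modulo $p-1$. Taking $\psi=\omega$ completes the argument.

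I do not expect any real obstacle here; the lemma is a standard textbook fact (as the citation to Ireland--Rosen indicates), and the argument above is mechanical. The only minor care needed is to verify that the identity $\psi(x)S=S$ is carried out inside the ring $\mathbb{Z}_p\subset\mathbb{C}_p$ where the characters take values, so that $\psi(x)-1\neq 0$ is genuinely invertible; this is fine since $\mathbb{Z}_p$ is a domain and $\psi(x)-1$ is a nonzero root-of-unity minus one.
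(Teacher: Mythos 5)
Your argument is correct and is essentially the standard orthogonality proof that the paper's citation to Ireland--Rosen points to (the paper itself gives no proof): the $x=1$ case is immediate, and the multiply-and-shift step with a character $\psi$ satisfying $\psi(x)\neq 1$ is exactly the textbook route, with your choice $\psi=\omega$ being a perfectly valid way to exhibit such a character since $\omega$ generates $\widehat{\mathbb{F}_p^\times}$. The closing remark about $\mathbb{Z}_p$ being a domain correctly justifies cancelling $\psi(x)-1$.
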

\par Let $\zeta_p$ be a fixed primitive $p$-th root of unity in $\overline{\mathbb{Q}_p}.$ 
For $\chi \in \widehat{\mathbb{F}_p^\times}$, the Gauss sum is defined by
\begin{align}
g(\chi):=\sum\limits_{x\in \mathbb{F}_p}\chi(x)~\zeta_p^x.\notag
\end{align}
From the definition we can say that $g(\varepsilon)=-1$. For more details on Gauss sums, see \cite{berndt}. We now introduce some properties of Gauss sums. Let 
$\delta: \widehat{\mathbb{F}_p^\times}
\rightarrow\{0,1\}$ be defined by
\begin{align}
\delta(\chi)=\left\{
   \begin{array}{ll}
    1 , & \hbox{if $\chi=\varepsilon$;} \\
  0, & \hbox{if $\chi\neq\varepsilon$.}
   \end{array}
 \right.
\end{align}
We start with a result that provides a formula for the multiplicative inverse of Gauss sum.
\begin{lemma}\cite[eq. 1.12]{greene} Let $\chi\in \widehat{\mathbb{F}_p^\times}.$ Then
\begin{align}\label{inverse}
g(\chi)g(\overline{\chi})=p\chi(-1)-(p-1)\delta(\chi).
\end{align}
\end{lemma}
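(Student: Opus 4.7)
The plan is to expand the product $g(\chi)g(\overline{\chi})$ as a double sum using the definition of the Gauss sum and then rearrange it so that the character dependence is separated from the sum over roots of unity. First, I would write
\begin{align*}
g(\chi)g(\overline{\chi}) = \sum_{x,y \in \mathbb{F}_p^\times} \chi(x)\overline{\chi}(y)\,\zeta_p^{x+y},
\end{align*}
restricting to $\mathbb{F}_p^\times$ since $\chi(0)=\overline{\chi}(0)=0$. The natural next move is the substitution $y = xz$ for $x \in \mathbb{F}_p^\times$ and $z \in \mathbb{F}_p^\times$, which is a bijection on the index set. Since $\chi(x)\overline{\chi}(xz) = \overline{\chi}(z)$, the double sum factors as
\begin{align*}
g(\chi)g(\overline{\chi}) = \sum_{z \in \mathbb{F}_p^\times} \overline{\chi}(z) \sum_{x \in \mathbb{F}_p^\times} \zeta_p^{x(1+z)}.
\end{align*}

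Next I would evaluate the inner sum. The standard identity $\sum_{x \in \mathbb{F}_p} \zeta_p^{xu} = p$ for $u=0$ and $0$ otherwise gives
\begin{align*}
\sum_{x \in \mathbb{F}_p^\times} \zeta_p^{x(1+z)} = \begin{cases} p-1, & z = -1, \\ -1, & z \neq -1. \end{cases}
\end{align*}
Splitting the outer sum accordingly yields
\begin{align*}
g(\chi)g(\overline{\chi}) = (p-1)\overline{\chi}(-1) - \sum_{\substack{z \in \mathbb{F}_p^\times \\ z \neq -1}} \overline{\chi}(z) = p\,\overline{\chi}(-1) - \sum_{z \in \mathbb{F}_p^\times} \overline{\chi}(z),
\end{align*}
where the second equality absorbs the missing $z=-1$ term back into the outer sum.

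Finally, I would apply the orthogonality relation \eqref{orthogonal-1} from Lemma 2.1 to the remaining character sum: it contributes $(p-1)\delta(\chi)$ (since $\overline{\chi} = \varepsilon$ iff $\chi = \varepsilon$). Combining this with the observation that $\overline{\chi}(-1) = \chi(-1)$ because $\chi(-1)^2 = \chi(1) = 1$ forces $\chi(-1) \in \{\pm 1\}$, the identity
\begin{align*}
g(\chi)g(\overline{\chi}) = p\chi(-1) - (p-1)\delta(\chi)
\end{align*}
follows. There is no substantive obstacle here; the proof is pure bookkeeping, and the only step that requires a small moment of care is verifying that the substitution $y = xz$ is indeed a bijection from $(\mathbb{F}_p^\times)^2$ to itself and producing the correct separation into the $z=-1$ and $z \neq -1$ cases.
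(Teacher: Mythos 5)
Your argument is correct, and it is the standard computation; the paper itself offers no proof of this identity, citing it directly from Greene (eq.~1.12), so there is nothing internal to compare against. One small citation slip: the orthogonality relation you need at the end is $\sum_{z\in\mathbb{F}_p^\times}\overline{\chi}(z)=(p-1)\delta(\chi)$ (sum over field elements for a fixed character), which is the dual of the relation \eqref{orthogonal-1} stated in the paper (sum over characters for a fixed element); both are standard, but the one you invoke is not literally the one you use. Everything else, including the bijectivity of $y=xz$, the evaluation of the inner additive character sum, and the observation $\overline{\chi}(-1)=\chi(-1)$, is sound, and the case $\chi=\varepsilon$ is handled correctly under the paper's convention $\chi(0)=0$.
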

Another important product formula for Gauss sums is the Hasse-Davenport formula.
\begin{theorem}\cite[Hasse-Davenport relation, Theorem 11.3.5]{berndt}
Let $\chi$ be a character of order $m$ of $\widehat{\mathbb{F}_p^\times}$ for some positive integer $m.$ For a multiplicative character $\psi$ of $\widehat{\mathbb{F}_p^\times}$ we  have
\begin{align}\label{dh}
\prod_{i=0}^mg(\psi\chi^i)=g(\psi^m)\psi^{-m}(m)\prod_{i=1}^{m-1}g(\chi^i).
\end{align}
\end{theorem}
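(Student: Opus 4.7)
The plan is to prove this classical product relation by expanding the left-hand side as an iterated character sum over $\mathbb{F}_p^\times$ and collapsing it via a substitution that exploits $\chi^m=\varepsilon$. Since the identity is standard and is cited here from Berndt--Evans--Williams, the proposed approach mirrors theirs.

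First I would expand the product directly from the definition of the Gauss sum,
\begin{align*}
\prod_{i=0}^{m-1} g(\psi\chi^i) = \sum_{x_0,\ldots,x_{m-1}\in\mathbb{F}_p^\times} \psi(x_0 x_1\cdots x_{m-1})\,\chi(x_1 x_2^2\cdots x_{m-1}^{m-1})\,\zeta_p^{x_0+x_1+\cdots+x_{m-1}},
\end{align*}
and introduce the change of variables $x_i = u\,y_i$ with $u\in\mathbb{F}_p^\times$ and the $y_i$'s normalized so that $y_0+y_1+\cdots+y_{m-1}=1$. Because $\chi^m=\varepsilon$, the $\chi(u^{m(m-1)/2})$-factor trivializes (or reduces to a sign when $m$ is even), and combined with $\psi(u^m)=\psi^m(u)$, the sum over $u$ collapses to a single Gauss sum $g(\psi^m)$, with the additional normalization factor $\psi^{-m}(m)$ emerging from the rescaling that absorbs the linear form $\sum y_i$.

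The remaining sum over the hyperplane $\{\sum y_i = 1\}$ then unfolds as an iterated Jacobi sum. Applying $J(\chi_1,\chi_2)=g(\chi_1)g(\chi_2)/g(\chi_1\chi_2)$ repeatedly, and using $\chi^m=\varepsilon$ to telescope the resulting products, gives exactly $\prod_{i=1}^{m-1}g(\chi^i)$. The main obstacle is the careful bookkeeping of the normalization factor $\psi^{-m}(m)$ and the handling of degenerate strata where some $x_i$ vanishes; these boundary terms require isolating trivial-character contributions via $g(\varepsilon)=-1$ and $\chi^i=\varepsilon$ when $m\mid i$, and this bookkeeping is the technical core of the proof, which is why the identity is usually quoted from standard references rather than reproved in new work.
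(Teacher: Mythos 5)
The paper gives no proof of this statement --- it is quoted directly from Berndt--Evans--Williams (Theorem 11.3.5) --- so there is nothing internal to compare against; note also that the product should run over $i=0,\dots,m-1$ (the upper limit $m$ in the displayed formula is an off-by-one typo, and you correctly work with $m$ factors). Judged on its own, your proposal has a genuine gap at its core: it is circular. After the substitution $x_i=uy_i$ with $\sum y_i=1$, the inner sum over the hyperplane is exactly the generalized Jacobi sum $J(\psi,\psi\chi,\dots,\psi\chi^{m-1})=\sum_{y_0+\cdots+y_{m-1}=1}\prod_i(\psi\chi^i)(y_i)$, and the only thing repeated application of $J(\chi_1,\chi_2)=g(\chi_1)g(\chi_2)/g(\chi_1\chi_2)$ yields is $J(\psi,\psi\chi,\dots,\psi\chi^{m-1})=\prod_{i=0}^{m-1}g(\psi\chi^i)/g(\psi^m\chi^{m(m-1)/2})$, which is precisely the decomposition you started from. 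No formal ``telescoping'' detaches the $\psi$-dependence and produces the $\psi$-free factor $\prod_{i=1}^{m-1}g(\chi^i)$; the nontrivial evaluation of this generalized Jacobi sum \emph{is} the theorem, and it cannot be extracted by rearranging the definition of the Gauss sum.

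Three concrete symptoms of the gap. First, the factor $\psi^{-m}(m)$ cannot ``emerge from the rescaling'': the substitution $x_i=uy_i$, $\sum y_i=1$, introduces no dependence on $m$ at all, so your argument as written would prove the identity without that factor, which is false (already for $m=2$, $\chi=\varphi$, $\psi=\chi_4$ one checks that $\psi^{-2}(2)=\varphi(2)$ is genuinely needed). Second, for even $m$ the character $\chi^{m(m-1)/2}=\chi^{m/2}$ equals the quadratic character $\varphi$, not a global sign, so the $u$-sum collapses to $g(\varphi\psi^m)$ rather than $g(\psi^m)$ --- a discrepancy your sketch does not resolve. Third, the degenerate stratum $\sum x_i=0$ contributes terms that are acknowledged but not controlled. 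The relation is genuinely deep: via the Gross--Koblitz formula it is equivalent to the Gauss multiplication formula \eqref{prod-1} for $\Gamma_p$, and the known proofs all use input of that strength (Stickelberger's theorem, the Hasse--Davenport lifting relation for Gauss sums over extension fields, or $p$-adic gamma functions). Citing the reference, as the paper does, is the appropriate treatment; if you want an actual proof you should follow one of those routes rather than a direct character-sum expansion.
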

The following lemma relates Gauss and Jacobi sums.
\begin{lemma}\cite[eq. 1.14]{greene} 
Let $\chi_1,\chi_2\in \widehat{\mathbb{F}_p^\times}.$ Then
\begin{align}\label{gauss-jacobi}
J(\chi_1,\chi_2)=\frac{g(\chi_1)g(\chi_2)}{g(\chi_1\chi_2)}+(p-1)\chi_2(-1)\delta(\chi_1\chi_2).
\end{align}
\end{lemma}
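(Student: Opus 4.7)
The plan is to compute $g(\chi_1)g(\chi_2)$ as a double sum over $\mathbb{F}_p \times \mathbb{F}_p$, regroup by the value $c = a+b$ of the exponent of $\zeta_p$, and read off $J(\chi_1,\chi_2)$ as the coefficient arising after a multiplicative change of variables. Concretely, I would begin with
\begin{align*}
g(\chi_1)g(\chi_2) = \sum_{a \in \mathbb{F}_p}\sum_{b \in \mathbb{F}_p} \chi_1(a)\chi_2(b)\,\zeta_p^{a+b},
\end{align*}
and, for each $c \in \mathbb{F}_p$, collect the inner sum over pairs $(a,b)$ with $a+b=c$. For $c \neq 0$, the substitution $a = ct$ (with $t$ ranging over $\mathbb{F}_p$) together with the definition \eqref{jacobi} of the Jacobi sum factors that inner sum as $(\chi_1\chi_2)(c)\, J(\chi_1,\chi_2)$. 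For $c = 0$, using $\chi_2(-a)=\chi_2(-1)\chi_2(a)$ reduces the inner sum to $\chi_2(-1)\sum_{a \in \mathbb{F}_p^\times}(\chi_1\chi_2)(a)$, which by the orthogonality relation \eqref{orthogonal-1} applied to $\chi_1\chi_2$ equals $(p-1)\chi_2(-1)\delta(\chi_1\chi_2)$.

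Reassembling, the $c \neq 0$ contributions sum to $J(\chi_1,\chi_2) \sum_{c \in \mathbb{F}_p^\times}(\chi_1\chi_2)(c)\,\zeta_p^c = J(\chi_1,\chi_2)\, g(\chi_1\chi_2)$ (using $(\chi_1\chi_2)(0)=0$), and combining this with the $c=0$ piece yields the master identity
\begin{align*}
g(\chi_1)g(\chi_2) = J(\chi_1,\chi_2)\, g(\chi_1\chi_2) + (p-1)\chi_2(-1)\delta(\chi_1\chi_2).
\end{align*}
Dividing through by $g(\chi_1\chi_2)$ gives \eqref{gauss-jacobi} directly whenever $\chi_1\chi_2 \neq \varepsilon$; in that subcase $\delta(\chi_1\chi_2)=0$, so the $\delta$-term on the right of \eqref{gauss-jacobi} is zero on both sides of the bookkeeping.

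The only delicate point is the case $\chi_1\chi_2 = \varepsilon$, where $g(\chi_1\chi_2) = g(\varepsilon) = -1$. The master identity then reads $g(\chi_1)g(\chi_2) = -J(\chi_1,\chi_2) + (p-1)\chi_2(-1)$, and solving for $J(\chi_1,\chi_2)$ produces exactly $g(\chi_1)g(\chi_2)/g(\chi_1\chi_2) + (p-1)\chi_2(-1)\delta(\chi_1\chi_2)$, so the single stated formula handles both cases uniformly. I do not expect a deep obstacle; the main care is bookkeeping, namely attributing the $c=0$ contribution to the $\delta$-term and interpreting the division by $g(\chi_1\chi_2)$ correctly when $\chi_1\chi_2$ is trivial (where the factor is $-1$, not $0$, so no genuine singularity arises).
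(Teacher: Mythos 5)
Your proof is correct. The paper does not actually prove this lemma---it is quoted verbatim from Greene's paper (eq.\ 1.14 there)---so there is no internal argument to compare against, but your derivation is the standard one: expand $g(\chi_1)g(\chi_2)$ as a double sum, fibre over $c=a+b$, rescale by $c$ in the nonzero fibres to extract $(\chi_1\chi_2)(c)\,J(\chi_1,\chi_2)$, and identify the $c=0$ fibre with the $\delta$-term. The case analysis at the end, using $g(\varepsilon)=-1$ to absorb the trivial-character case into the single displayed formula, is exactly the right bookkeeping. One small slip: the fact you need for the $c=0$ fibre is $\sum_{a\in\mathbb{F}_p^\times}(\chi_1\chi_2)(a)=(p-1)\delta(\chi_1\chi_2)$, which is the orthogonality relation summed over \emph{elements} for a fixed character, not the relation \eqref{orthogonal-1} summed over \emph{characters} for a fixed element; the two are dual and equally standard, so this is a citation mismatch rather than a gap.
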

Let $\chi,\psi$ be multiplicative characters of $\mathbb{F}_p.$ Then the following special values of binomials are very useful to prove our main results, for more details we refer \cite[eq. 2.12, eq. 2.7]{greene}.
\begin{align}\label{rel-1}
{\chi\choose\varepsilon}={\chi\choose\chi}=-\frac{1}{p}+\frac{p-1}{p}\delta(\chi).
\end{align}
\begin{align}\label{rel-2}
{\chi\choose\psi}={\psi\overline{\chi}\choose\psi}\psi(-1).
\end{align}

\subsection{$p$-adic Preliminaries:}
We recall the $p$-adic gamma function, for further details see \cite{kob}.
For a positive integer $n$,
the $p$-adic gamma function $\Gamma_p(n)$ is defined as
\begin{align}
\Gamma_p(n):=(-1)^n\prod\limits_{0<j<n,p\nmid j}j\notag
\end{align}
and one can extend it to all $x\in\mathbb{Z}_p$ by setting $\Gamma_p(0):=1$ and
\begin{align}
\Gamma_p(x):=\lim_{x_n\rightarrow x}\Gamma_p(x_n)\notag
\end{align}
for $x\neq0$, where $x_n$ runs through any sequence of positive integers $p$-adically approaching $x$.
Two important product formulas of $p$-adic gamma function form \cite{gross} are as follows.
If $x\in\mathbb{Z}_p$ then 
\begin{align}\label{prod-3}
\Gamma_p(x)\Gamma_p(1-x)=(-1)^{a_0(x)},
\end{align}
where $a_0(x)\equiv x\pmod{p}$ such that $a_0(x)\in\{1,2,\ldots,p\}.$
If $m\in\mathbb{Z}^+,$ 
$p\nmid m$  and $x=\frac{r}{p-1}$ with $0\leq r\leq p-1$ then
\begin{align}\label{prod-1}
\prod_{h=0}^{m-1}\Gamma_p\left(\frac{x+h}{m}\right)=\omega(m^{(1-x)(1-p)})~\Gamma_p(x)\prod_{h=1}^{m-1}\Gamma_p\left(\frac{h}{m}\right).
\end{align}
Another interesting product formula of $p$-adic gamma function given in \cite{mccarthy-pacific} as a consequence of \eqref{prod-1} described as follows. Let $t\in\mathbb{Z}^{+}$ and $p\nmid t$. Then for $0\leq j\leq p-2$ we have
\begin{align}\label{prod-2}
\omega(t^{-tj})\Gamma_p\left(\left\langle\frac{-tj}{p-1}\right\rangle\right)\prod_{h=1}^{t-1}\Gamma_p\left(\frac{h}{t}\right)
=\prod_{h=1}^{t}\Gamma_p\left(\left\langle\frac{h}{t}-\frac{j}{p-1}\right\rangle\right).
\end{align}
Let $\pi \in \mathbb{C}_p$ be the fixed root of the polynomial $x^{p-1} + p$, which satisfies the congruence condition
$\pi \equiv \zeta_p-1 \pmod{(\zeta_p-1)^2}$. The Gross-Koblitz formula relates the Gauss sum and $p$-adic gamma function as follows.
\begin{theorem}\cite[Gross-Koblitz formula]{gross}\label{gross-koblitz} For $j\in \mathbb{Z}$,
\begin{align}
g(\overline{\omega}^j)=-\pi^{(p-1)\langle\frac{j}{p-1} \rangle}\Gamma_p\left(\left\langle \frac{j}{p-1} \right\rangle\right).\notag
\end{align}
\end{theorem}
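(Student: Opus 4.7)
The plan is to reduce Theorem \ref{paf} directly to the classification given in Theorem \ref{SV-1}, since the two arguments $1/x$ and $(x-1)/x$ are special in that the quantity $(t-1)/t$ — which governs which case of Theorem \ref{SV-1} applies — simplifies very cleanly in both cases. Concretely, I would first compute these two quantities: for $t = 1/x$ one gets $(t-1)/t = 1 - x$, and for $t = (x-1)/x$ one gets $(t-1)/t = 1/(1-x)$. Since $1-x \in \mathbb{F}_p^\times$ (because $x \neq 1$) and $1 - x$ is a nonzero square if and only if $1/(1-x)$ is a nonzero square, the two arguments always fall into the same case of Theorem \ref{SV-1}. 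One also checks that neither $1/x$ nor $(x-1)/x$ can equal $1$ given the hypothesis $x \neq 0, 1$, so we never need the special $t=1$ branch.

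For part (1), the hypothesis $1-x \neq \square$ places both $t=1/x$ and $t=(x-1)/x$ in the third case of Theorem \ref{SV-1}, so both $_2G_2$-values vanish and \eqref{paf-1} reduces to $0=0$. For part (2), when $1-x = a^2$ with $a \in \mathbb{F}_p^\times$, I would apply case (2) of Theorem \ref{SV-1} with $t = 1/x$ (so the square root in the formula is $a$ itself), obtaining
\begin{align*}
{_2G_2}\!\left[\begin{array}{cc}\tfrac14, & \tfrac34\\ 0, & \tfrac12\end{array}\!\mid\! \tfrac{1}{x}\right] = \frac{-\varphi(-1)}{\Gamma_p(\tfrac14)\Gamma_p(\tfrac34)}\bigl(\varphi(1+a)+\varphi(1-a)\bigr),
\end{align*}
and then apply the same case with $t = (x-1)/x$, where now $(t-1)/t = 1/a^2 = (1/a)^2$, so the square root is $1/a$ and the corresponding value is
\begin{align*}
{_2G_2}\!\left[\begin{array}{cc}\tfrac14, & \tfrac34\\ 0, & \tfrac12\end{array}\!\mid\! \tfrac{x-1}{x}\right] = \frac{-\varphi(-1)}{\Gamma_p(\tfrac14)\Gamma_p(\tfrac34)}\bigl(\varphi(1+\tfrac{1}{a})+\varphi(1-\tfrac{1}{a})\bigr).
\end{align*}

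The one step that requires a little care is the character manipulation that matches these two expressions to the claimed identity \eqref{paf-2}. Using $\varphi(1/a) = \varphi(a)$ (since $\varphi$ is quadratic) and the multiplicativity of $\varphi$, I would rewrite
\begin{align*}
\varphi(1+\tfrac{1}{a}) + \varphi(1-\tfrac{1}{a}) = \varphi(a)\bigl(\varphi(a+1) + \varphi(a-1)\bigr).
\end{align*}
Substituting this into the second formula and cross-multiplying the two expressions by $\varphi(a)(\varphi(a+1)+\varphi(a-1))$ and by $(\varphi(1+a)+\varphi(1-a))$ respectively produces the same quantity on both sides, yielding \eqref{paf-2}. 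There is no real obstacle here — the proof is essentially a bookkeeping exercise — but the one point to watch is verifying that the cases of Theorem \ref{SV-1} line up for both arguments simultaneously (equivalently, that $1-x$ and $1/(1-x)$ have the same quadratic character), and that none of the excluded degenerate values of $t$ are reached under the hypothesis $x \neq 0, 1$.
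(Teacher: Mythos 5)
The statement you were asked to prove is the Gross--Koblitz formula, Theorem \ref{gross-koblitz}: the evaluation $g(\overline{\omega}^j)=-\pi^{(p-1)\langle j/(p-1)\rangle}\Gamma_p\left(\left\langle j/(p-1)\right\rangle\right)$ of a Gauss sum in terms of Morita's $p$-adic gamma function and the fixed root $\pi$ of $x^{p-1}+p$. Your proposal never engages with this: it contains no Gauss sums, no $\pi$, and no $\Gamma_p$ evaluated at $\langle j/(p-1)\rangle$; instead it argues Theorem \ref{paf}, the $p$-adic analogue of Pfaff's transformation. So, measured against the statement actually posed, this is not a gap in a proof but the absence of one. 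Be aware as well that the paper itself offers no proof of Theorem \ref{gross-koblitz}: it is imported verbatim from Gross and Koblitz, where the argument is a serious piece of $p$-adic analysis (Dwork's splitting function and the Frobenius structure behind it), of an entirely different character from the finite-field bookkeeping you perform. Reducing it to Theorem \ref{SV-1} is not a viable route, since Theorem \ref{SV-1} is itself proved \emph{using} Gross--Koblitz (via Propositions \ref{prop-1} and \ref{prop-2}); your strategy would be circular even if it were aimed at the right target.

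For completeness: read as a blind proof of Theorem \ref{paf}, your argument is correct and is essentially the paper's own. You reduce to Theorem \ref{SV-1} at $t=1/x$ and $t=(x-1)/x$, note that $(t-1)/t$ equals $1-x$ and $1/(1-x)$ respectively so the two arguments always land in the same case, and use $\varphi(1+a^{-1})+\varphi(1-a^{-1})=\varphi(a)\bigl(\varphi(a+1)+\varphi(a-1)\bigr)$ to match \eqref{one} and \eqref{two} against \eqref{paf-2}; the paper does exactly this, phrased through Propositions \ref{prop-1} and \ref{prop-2}. But that does not address the theorem in the statement.
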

The next two lemmas are helpful in the proof of our main results. These two lemmas are direct applications of Gross-Koblitz formula.
\begin{lemma}
For $1\leq j\leq p-2$
\begin{align}\label{lemma-1}
\Gamma_p\left(\left\langle1-\frac{j}{p-1}\right\rangle\right)\Gamma_p\left(\left\langle\frac{j}{p-1}\right\rangle\right)
=-\omega^j(-1).
\end{align}
\end{lemma}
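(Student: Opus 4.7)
The cleanest route uses exactly the two tools just introduced: the Gross--Koblitz formula (Theorem \ref{gross-koblitz}) together with the Gauss sum inverse identity \eqref{inverse}. The idea is to recognize the product on the left-hand side as (a scalar multiple of) $g(\omega^j)g(\overline{\omega}^j)$, and then apply \eqref{inverse} to evaluate it.

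First I would record that for $1\leq j\leq p-2$ both $\tfrac{j}{p-1}$ and $1-\tfrac{j}{p-1}$ lie in $(0,1)$, so
\[
\left\langle\tfrac{j}{p-1}\right\rangle=\tfrac{j}{p-1},\qquad \left\langle 1-\tfrac{j}{p-1}\right\rangle=1-\tfrac{j}{p-1}=\left\langle\tfrac{p-1-j}{p-1}\right\rangle.
\]
Since $\omega^j=\overline{\omega}^{\,p-1-j}$, Gross--Koblitz then gives
\[
g(\overline{\omega}^j)=-\pi^{j}\,\Gamma_p\!\left(\tfrac{j}{p-1}\right),\qquad g(\omega^j)=-\pi^{p-1-j}\,\Gamma_p\!\left(1-\tfrac{j}{p-1}\right).
\]

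Next I would multiply these two equalities. Using $\pi^{p-1}=-p$ (the defining property of $\pi$), the product becomes
\[
g(\omega^j)\,g(\overline{\omega}^j)=-p\,\Gamma_p\!\left(1-\tfrac{j}{p-1}\right)\Gamma_p\!\left(\tfrac{j}{p-1}\right).
\]
On the other hand, since $1\leq j\leq p-2$ the character $\omega^j$ is nontrivial, so $\delta(\omega^j)=0$ and \eqref{inverse} yields $g(\omega^j)g(\overline{\omega}^j)=p\,\omega^j(-1)$. Equating the two expressions and dividing by $-p$ gives the claimed identity $\Gamma_p(\langle 1-\tfrac{j}{p-1}\rangle)\Gamma_p(\langle\tfrac{j}{p-1}\rangle)=-\omega^j(-1)$.

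There is essentially no obstacle here; the only point requiring a bit of care is the index bookkeeping to pass from $g(\omega^j)$ to $g(\overline{\omega}^{\,p-1-j})$ and to verify that the resulting fractional parts really are $\tfrac{j}{p-1}$ and $1-\tfrac{j}{p-1}$ without any $(-p)^{\lfloor\cdot\rfloor}$ correction. One could alternatively deduce the lemma from the reflection formula \eqref{prod-3} by checking that $a_0(\tfrac{j}{p-1})=p-j$ (using $\tfrac{1}{p-1}\equiv -1\pmod p$) and that $\omega(-1)=-1$, but the Gauss-sum argument above is more in the spirit of the rest of the paper.
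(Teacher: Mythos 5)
Your proof is correct and follows exactly the route the paper indicates: apply the Gross--Koblitz formula to both factors (identifying $\Gamma_p(\langle 1-\tfrac{j}{p-1}\rangle)$ with the Gauss sum of $\omega^j=\overline{\omega}^{\,p-1-j}$), use $\pi^{p-1}=-p$, and invoke \eqref{inverse} for the nontrivial character $\omega^j$. The paper's own proof is just a one-line sketch of this same argument, so your write-up simply supplies the details it leaves to the reader.
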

\begin{proof}
Applying Gross-Koblitz formula (Theorem \ref{gross-koblitz}) on the left hand side of \eqref{lemma-1} and then using \eqref{inverse} it is straightforward to verify the lemma.
\end{proof}
\begin{lemma}
For $1\leq j\leq p-2$ we have
\begin{align}\label{lemma-2}
&\frac{(-p)^{-\lfloor\frac{1}{2}+\frac{j}{p-1}\rfloor}}{\Gamma_p(\frac{1}{2})}~
\Gamma_p\left(\left\langle\frac{1}{2}+\frac{j}{p-1}\right\rangle\right)
\Gamma_p\left(\left\langle1-\frac{j}{p-1}\right\rangle\right)\notag\\
&=\frac{1}{p}\sum_{t\in\mathbb{F}_p^\times}
\overline{\omega}^j(-t)\varphi(t(t-1)).
\end{align}
\end{lemma}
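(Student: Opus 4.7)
The plan is to transform the character sum on the right of \eqref{lemma-2} into a Jacobi sum, express that as a ratio of Gauss sums, and then collapse everything via the Gross--Koblitz formula. First I would factor $\overline{\omega}^j(-t)=\overline{\omega}^j(-1)\overline{\omega}^j(t)$ and write $\varphi(t-1)=\varphi(-1)\varphi(1-t)$, so that the definition \eqref{jacobi} gives
\begin{align*}
\sum_{t\in\mathbb{F}_p^\times}\overline{\omega}^j(-t)\varphi(t(t-1))
=\overline{\omega}^j(-1)\varphi(-1)\,J(\overline{\omega}^j\varphi,\varphi).
\end{align*}
Since $1\leq j\leq p-2$ forces $\overline{\omega}^j\varphi\cdot\varphi=\overline{\omega}^j\neq\varepsilon$, the delta term in \eqref{gauss-jacobi} vanishes and $J(\overline{\omega}^j\varphi,\varphi)=g(\overline{\omega}^j\varphi)g(\varphi)/g(\overline{\omega}^j)$.

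Next, writing $\varphi=\overline{\omega}^{(p-1)/2}$, I would apply Gross--Koblitz (Theorem \ref{gross-koblitz}) to each of the three Gauss sums. The key bookkeeping is to verify that in both subcases $j<(p-1)/2$ and $j\geq(p-1)/2$ one has $\langle(j+(p-1)/2)/(p-1)\rangle=\langle 1/2+j/(p-1)\rangle$ (reduction modulo $1$), and that the net power of $\pi$ after cancelling with the denominator $\pi^j$ collects into $(p-1)(1-\lfloor 1/2+j/(p-1)\rfloor)$. Using $\pi^{p-1}=-p$ this converts into the factor $(-p)^{1-\lfloor 1/2+j/(p-1)\rfloor}$, and after dividing by $p$ it becomes $-(-p)^{-\lfloor 1/2+j/(p-1)\rfloor}$, producing the prescribed floor factor on the left of \eqref{lemma-2}.

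At this point the expression contains $\Gamma_p(\langle 1/2+j/(p-1)\rangle)\Gamma_p(\tfrac12)/\Gamma_p(\langle j/(p-1)\rangle)$ together with a scalar $-\overline{\omega}^j(-1)\varphi(-1)$. I would use Lemma \eqref{lemma-1} to rewrite $1/\Gamma_p(\langle j/(p-1)\rangle)$ as $-\overline{\omega}^j(-1)\Gamma_p(\langle 1-j/(p-1)\rangle)$; this supplies the second gamma factor on the left of \eqref{lemma-2} and combines the two $\overline{\omega}^j(-1)$ factors into $1$. The leftover scalar $-\varphi(-1)\Gamma_p(\tfrac12)$ I would convert to $1/\Gamma_p(\tfrac12)$ via the identity $\Gamma_p(\tfrac12)^2=-\varphi(-1)$, which follows from \eqref{prod-3} with $x=\tfrac12$ since $a_0(\tfrac12)=(p+1)/2$ and hence $(-1)^{(p+1)/2}=-(-1)^{(p-1)/2}=-\varphi(-1)$. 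The main obstacle here is purely one of sign bookkeeping: tracking the two cases for the floor function, checking the cancellation of the two $\overline{\omega}^j(-1)$ factors, and matching the final $\Gamma_p(\tfrac12)$ denominator through the parity identity; no deeper input is required beyond Gross--Koblitz, \eqref{gauss-jacobi}, and \eqref{lemma-1}.
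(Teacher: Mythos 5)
Your proposal is correct and is essentially the paper's own proof run in reverse: the paper starts from the gamma-quotient side and applies Gross--Koblitz together with \eqref{inverse} and \eqref{gauss-jacobi} to reach $\frac{\varphi\omega^j(-1)}{p}J(\varphi\overline{\omega}^j,\varphi)$, then unfolds the Jacobi sum into the character sum --- exactly your chain of reductions traversed backwards, with your appeals to \eqref{lemma-1} and to $\Gamma_p(\tfrac12)^2=-\varphi(-1)$ repackaging the paper's use of \eqref{inverse}. The only blemish is a transient sign slip: once the three leading minus signs from Gross--Koblitz are counted, the scalar multiplying the gamma quotient is $+\overline{\omega}^j(-1)\varphi(-1)$ rather than $-\overline{\omega}^j(-1)\varphi(-1)$, but the steps you state afterwards (the factor $-\overline{\omega}^j(-1)$ from \eqref{lemma-1} and the identity $-\varphi(-1)\Gamma_p(\tfrac12)=1/\Gamma_p(\tfrac12)$) are correct and the computation closes as claimed.
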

\begin{proof}
Let $U=\frac{(-p)^{-\lfloor\frac{1}{2}+\frac{j}{p-1}\rfloor}}{\Gamma_p(\frac{1}{2})}~
\Gamma_p\left(\left\langle\frac{1}{2}+\frac{j}{p-1}\right\rangle\right)
\Gamma_p\left(\left\langle1-\frac{j}{p-1}\right\rangle\right).$ Using Gross-Koblitz formula (Theorem \ref{gross-koblitz}), \eqref{inverse}, and \eqref{gauss-jacobi} we obtain
 \begin{align*}
 U&=\frac{\varphi\omega^j(-1)}{p}J(\varphi\overline{\omega}^j,\varphi)
 =\frac{1}{p}\sum_{t\in\mathbb{F}_p^\times}\overline{\omega}^j(-t)\varphi(t(t-1)).
 \end{align*}
 This completes the proof of the lemma.
 \end{proof}
\section{Proof of the theorems}
We begin with a proposition which explicitly determines the value of a character sum. We use this proposition to prove Theorem \ref{SV-1} and Theorem \ref{sum-1}.
\begin{proposition}\label{prop-1}
For $x\in\mathbb{F}_p^{\times}$ we have
\begin{align*}
&\sum_{\chi\in\widehat{\mathbb{F}_p^\times}}g(\varphi\chi^2)g(\varphi\overline{\chi})g(\overline{\chi})
\chi\left(\frac{x}{4}\right)\\
&=\left\{
   \begin{array}{ll}
    0 , & \hbox{if $1-x\neq\square$;} \\
    p(p-1)\varphi(-2),  & \hbox{if $x=1$;}\\
  p(p-1)\varphi(-2)(\varphi(1+a)+\varphi(1-a)), & \hbox{if $x\neq1$ and $1-x=a^2$.}
   \end{array}
 \right.
\end{align*}
\end{proposition}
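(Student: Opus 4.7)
The plan is to reduce the triple Gauss-sum product via Hasse-Davenport, convert the remainder to a Jacobi sum, apply character orthogonality to collapse the $\chi$-sum, and finally reconcile the resulting formula with the statement.

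First, I would apply the Hasse-Davenport relation \eqref{dh} with $m=2$, the order-$2$ character $\varphi$, and $\psi = \overline{\chi}$, producing
\begin{align*}
g(\overline{\chi})\,g(\varphi\overline{\chi}) = g(\overline{\chi^2})\,\chi^2(2)\,g(\varphi).
\end{align*}
Since $\chi^2(2)\,\chi(x/4) = \chi(x)$, the full character sum reduces to $g(\varphi)\sum_\chi g(\varphi\chi^2)\,g(\overline{\chi^2})\,\chi(x)$. Because $\varphi\chi^2 \cdot \overline{\chi^2} = \varphi \neq \varepsilon$, \eqref{gauss-jacobi} gives $g(\varphi\chi^2)\,g(\overline{\chi^2}) = J(\varphi\chi^2, \overline{\chi^2})\,g(\varphi)$, while \eqref{inverse} supplies $g(\varphi)^2 = p\varphi(-1)$. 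The sum in question therefore equals $p\varphi(-1)\sum_\chi J(\varphi\chi^2, \overline{\chi^2})\,\chi(x)$.

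Next, I would expand $J(\varphi\chi^2, \overline{\chi^2}) = \sum_y \varphi(y)\,\chi^2(y/(1-y))$, swap the order of summation, and invoke the orthogonality relation \eqref{orthogonal-1}. The inner sum $\sum_\chi \chi(xy^2/(1-y)^2)$ evaluates to $p-1$ precisely when $xy^2 = (1-y)^2$ and to $0$ otherwise. This equation admits solutions $y \in \mathbb{F}_p \setminus \{0,1\}$ if and only if $x$ is a nonzero square: when $x = 1$ the lone solution is $y = 1/2$, and when $x = b^2$ with $b \neq \pm 1$ the two solutions are $y = 1/(1 \pm b)$. Substituting back, the character sum $S(x)$ satisfies $S(x) = 0$ whenever $x$ is not a square, $S(1) = p(p-1)\varphi(-1)\varphi(2) = p(p-1)\varphi(-2)$, and
\begin{align*}
S(b^2) = p(p-1)\varphi(-1)\bigl[\varphi(1+b) + \varphi(1-b)\bigr] \quad \text{for } b \neq \pm 1.
\end{align*}

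The main obstacle is matching this last expression with the statement, which is parameterized via $1-x = a^2$ rather than via $x = b^2$. The key algebraic ingredient is the identity $(1 \pm a \pm b)^2 = 2(1 \pm a)(1 \pm b)$ (matched signs), which holds whenever $a^2 + b^2 = 1$. From it, $\varphi(2(1+a)(1 \pm b)) = 1$, so $\varphi(1 \pm b) = \varphi(2)\varphi(1+a)$; together with $\varphi(1+a) = \varphi(1-a)$ (forced by $(1+a)(1-a) = b^2$ being a square), this gives $\varphi(1+b) + \varphi(1-b) = \varphi(2)[\varphi(1+a) + \varphi(1-a)]$, producing the third case. When $1-x$ is not a square, $\varphi((1-b)(1+b)) = \varphi(1-x) = -1$ forces $\varphi(1+b) + \varphi(1-b) = 0$, which combined with the non-square-$x$ subcase yields the first case. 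Finally, one verifies for consistency that the third-case formula also gives $0$ when $x$ itself is a non-square, since then $(1+a)(1-a) = x$ forces $\varphi(1+a) + \varphi(1-a) = 0$, completing the classification.
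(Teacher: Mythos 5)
Your proof is correct, but it takes a genuinely different route from the paper's. The paper never invokes Hasse--Davenport in this proposition: instead it multiplies and divides by $g(\varphi\chi)$, uses \eqref{gauss-jacobi} and \eqref{inverse} to split the sum into a main term $p\varphi(-1)\sum_\chi J(\varphi\chi^2,\overline{\chi})\,\chi(-x/4)$ plus three $\delta(\varphi\chi)$ correction terms, and checks that the corrections cancel; the binomial identity \eqref{rel-2} and orthogonality then reduce everything to the quadratic $4y^2-4y+x=0$, whose roots $y=\tfrac{1}{2}(1\pm a)$ are already parameterized by $1-x=a^2$, so the stated formula drops out with no further work. Your Hasse--Davenport reduction $g(\overline{\chi})g(\varphi\overline{\chi})=g(\overline{\chi^2})\chi^2(2)g(\varphi)$ is cleaner at the front end --- since $\varphi\chi^2\cdot\overline{\chi^2}=\varphi$ is never trivial, there is no $\delta$ bookkeeping at all --- but it lands on the equation $xy^2=(1-y)^2$, which is naturally parameterized by $x=b^2$ rather than by $1-x=a^2$, so you pay for the cleaner reduction with the reconciliation step via $(1\pm a\pm b)^2=2(1\pm a)(1\pm b)$. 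That identity and the square/non-square bookkeeping in your final paragraph are all correct, including the consistency check that the third case of the statement still returns $0$ when $1-x$ is a square but $x$ is not, and the observation that a square $x$ with non-square $1-x$ forces $\varphi(1+b)+\varphi(1-b)=0$. Both arguments are complete; yours trades the paper's cancellation of $\delta$-correction terms for an elementary algebraic identity at the end.
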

\begin{proof} 
Let $A=\displaystyle\sum_{\chi\in\widehat{\mathbb{F}_p^\times}}g(\varphi\chi^2)g(\varphi\overline{\chi})g(\overline{\chi})
\chi\left(\frac{x}{4}\right).$ Multiplying both numerator and denominator by $g(\varphi\chi)$ we can write  
\begin{align}\label{eqn-1}
A=\sum_{\chi\in\widehat{\mathbb{F}_q^\times}}\frac{g(\varphi\chi^2)g(\overline{\chi})}{g(\varphi\chi)}
~g(\varphi\chi)g(\varphi\overline{\chi})~\chi\left(\frac{x}{4}\right).
\end{align}
Applying \eqref{gauss-jacobi} we have
\begin{align}\label{eqn-2}
\frac{g(\varphi\chi^2)g(\overline{\chi})}{g(\varphi\chi)}=J(\varphi\chi^2,\overline{\chi})-(p-1)\chi(-1)\delta(\varphi\chi).
\end{align}
Also, applying \eqref{inverse} we have
\begin{align}\label{eqn-3}
g(\varphi\chi)g(\varphi\overline{\chi})=p\varphi\chi(-1)-(p-1)\delta(\varphi\chi).
\end{align}
Substituting \eqref{eqn-2} and \eqref{eqn-3} into \eqref{eqn-1} we obtain
\begin{align}\label{eqn-4}
A=A_1+A_2+A_3+A_4,
\end{align}
where \begin{align}\label{eqn-5}
A_1=p\varphi(-1)\sum_{\chi\in\widehat{\mathbb{F}_p^\times}}J(\varphi\chi^2,\overline{\chi})\chi\left(\frac{-x}{4}\right),
\end{align}
\begin{align}\label{eqn-6}
A_2&=-(p-1)\sum_{\chi\in\widehat{\mathbb{F}_p^\times}}\delta(\varphi\chi)J(\varphi\chi^2,\overline{\chi})
~\chi\left(\frac{x}{4}\right)\notag\\
&=-(p-1)\varphi(x)J(\varphi,\varphi),
\end{align}
\begin{align}\label{eqn-7}
A_3&=-p(p-1)\varphi(-1)\sum_{\chi\in\widehat{\mathbb{F}_p^\times}}\delta(\varphi\chi)~\chi\left(\frac{x}{4}\right)\notag\\
&=-p(p-1)\varphi(-x),
\end{align}
and 
\begin{align}\label{eqn-8}
A_4&=(p-1)^2\sum_{\chi\in\widehat{\mathbb{F}_p^\times}}\delta(\varphi\chi)~\chi\left(\frac{-x}{4}\right)\notag\\
&=(p-1)^2\varphi(-x).
\end{align}
Using \eqref{binomial}, and \eqref{rel-1} in \eqref{eqn-6} we have
\begin{align}\label{eqn-9}
A_2&=-p(p-1)\varphi(-x){\varphi\choose\varphi}\notag\\
&=(p-1)\varphi(-x).
\end{align}
Adding \eqref{eqn-7}, \eqref{eqn-8}, and \eqref{eqn-9} we obtain
\begin{align}\label{eqn-10}
A_2+A_3+A_4=0.
\end{align}
Substituting \eqref{eqn-10} into \eqref{eqn-4} we have
\begin{align}
A=A_1=p\varphi(-1)\sum_{\chi\in\widehat{\mathbb{F}_p^\times}}J(\varphi\chi^2,\overline{\chi})~\chi\left(\frac{-x}{4}\right).\notag
\end{align}
\eqref{binomial} gives 
\begin{align}
A=p^2\varphi(-1)\sum_{\chi\in\widehat{\mathbb{F}_p^\times}}{\varphi\chi^2\choose\chi}~
\chi\left(\frac{x}{4}\right).\notag
\end{align} 
If we use \eqref{rel-2} then we have ${\varphi\chi^2\choose\chi}=\chi(-1){\varphi\overline{\chi}\choose\chi}.$ This yields
\begin{align}\label{eqn-11}
A&=p^2\varphi(-1)\sum_{\chi\in\widehat{\mathbb{F}_p^\times}}{\varphi\overline{\chi}\choose\chi}~\chi\left(\frac{-x}{4}\right)\notag\\
&=p\varphi(-1)\sum_{\substack{\chi\in\widehat{\mathbb{F}_p^\times}\\y\in\mathbb{F}_p}}\varphi(y)\overline{\chi}(y)
\overline{\chi}(1-y)\chi\left(\frac{x}{4}\right).
\end{align}
Replacing $\chi$ by $\overline{\chi}$ in \eqref{eqn-11} we obtain
\begin{align}\label{eqn-13}
A=p\varphi(-1)\sum_{y\in\mathbb{F}_p}\varphi(y)\sum_{\chi\in\widehat{\mathbb{F}_p^\times}}
\chi\left(\frac{4y(1-y)}{x}\right).
\end{align}
Using the orthogonality relation \eqref{orthogonal-1} we can say that second sum present in \eqref{eqn-13} is non zero  if and only if the equation $4y^2-4y+x=0$ admits a solution. We know that $4y^2-4y+x=0$ is solvable if and only if $1-x$ is a square in $\mathbb{F}_p.$ Let $1-x=a^2$ for some $a\in\mathbb{F}_p.$ Then $\frac{1}{2}(1\pm a)$ are solutions of
$4y^2-4y+x=0.$ Hence, we obtain
\begin{align*}
A=\left\{
   \begin{array}{ll}
   p(p-1)\varphi(-2), & \hbox{if $x=1$;} \\
    p(p-1)\varphi(-2)(\varphi(1+a)+\varphi(1-a)) , & \hbox{if $x\neq1$ and $1-x=a^2$;} \\
   0, & \hbox{if $1-x\neq\square$.}
   \end{array}
 \right.
\end{align*} 
This completes the proof.
\end{proof}
In the next proposition, we again consider the same character sum as considered in Proposition \ref{prop-1} and express the sum as a special value of $p$-adic hypergeometric series. We use this proposition to prove Theorem \ref{SV-1}.
\begin{proposition}\label{prop-2}
For $x\in\mathbb{F}_p^{\times}$ we have
\begin{align*}
&\sum_{\chi\in\widehat{\mathbb{F}_p^\times}}g(\varphi\chi^2)g(\varphi\overline{\chi})g(\overline{\chi})
\chi\left(\frac{x}{4}\right)=p(1-p)\varphi(2)\Gamma_p\left(\frac{1}{4}\right)\Gamma_p\left(\frac{3}{4}\right)~{_2G_2}\left[\begin{array}{cc}
\frac{1}{4},& \frac{3}{4}\vspace{1mm}\\
0,& \frac{1}{2}
\end{array}\mid\frac{1}{x}\right].
\end{align*}
\end{proposition}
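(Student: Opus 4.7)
The strategy is to start from the right-hand side, use the duplication formula \eqref{prod-1} with $m=2$ to collapse the pair $\Gamma_p(\langle 1/4-t\rangle)\Gamma_p(\langle 3/4-t\rangle)$ into $\Gamma_p(\langle 1/2-2t\rangle)\Gamma_p(1/2)$ (up to an explicit Teichm\"uller factor), and then convert every remaining $p$-adic gamma factor into a Gauss sum via the Gross--Koblitz formula (Theorem~\ref{gross-koblitz}), matching the character sum on the left term-by-term.

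Writing $t=a/(p-1)$ and setting $x=\langle 1/2-2t\rangle$, we note that $x$ is of the form $r/(p-1)$ with $0\le r\le p-1$, so formula \eqref{prod-1} with $m=2$ applies and gives
\begin{equation*}
\Gamma_p(\langle 1/4-t\rangle)\,\Gamma_p(\langle 3/4-t\rangle)=\omega(2^{(1-x)(1-p)})\,\Gamma_p(\langle 1/2-2t\rangle)\,\Gamma_p(1/2).
\end{equation*}
A case-by-case check on the sub-intervals $[0,1/4]$, $(1/4,1/2]$, $(1/2,3/4]$, $(3/4,1)$ of $t$ verifies that $\{x/2,(x+1)/2\}=\{\langle 1/4-t\rangle,\langle 3/4-t\rangle\}$ as sets, and the Teichm\"uller factor simplifies (via $\omega^{(p-1)/2}=\varphi$) to $\omega(2^{(1-x)(1-p)})=\varphi(2)\overline{\omega}^a(4)$.

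Parametrize the $_2G_2$ sum by $\chi=\overline{\omega}^{-a}$. Substituting the duplication identity into the unfolded right-hand side, the factor $\varphi(2)$ cancels the $\varphi(2)$ in the prefactor, while $\overline{\omega}^a(4)$ combines with the Teichm\"uller weight $\overline{\omega}^a(1/x)$ to produce $\overline{\omega}^a(4/x)=\chi(x/4)$, which matches the weight on the left. Each of the three remaining gamma factors $\Gamma_p(\langle 1/2-2t\rangle)$, $\Gamma_p(\langle 1/2+t\rangle)$, $\Gamma_p(\langle t\rangle)$ now has argument of the admissible form $\langle j/(p-1)\rangle$ (with integer $j$ congruent to $(p-1)/2-2a$, $(p-1)/2+a$, and $a$ respectively modulo $p-1$), so Gross--Koblitz converts them, up to explicit powers of $\pi$, into $-g(\varphi\chi^2)$, $-g(\varphi\overline{\chi})$, $-g(\overline{\chi})$. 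Using $\pi^{p-1}=-p$, the accumulated powers of $\pi$ combine with the floor-function $(-p)^{-\lfloor\cdot\rfloor}$ from $_2G_2$ and the normalization $-1/(p-1)$ into the single constant $p(1-p)$, while $\Gamma_p(1/2)^2=-1$ (from \eqref{prod-3}) eliminates the two $\Gamma_p(1/2)$ factors (one from duplication, one from the $_2G_2$ denominator) and leaves the prefactor $\Gamma_p(1/4)\Gamma_p(3/4)$ in place. The main obstacle is this final bookkeeping: verifying in each of the four sub-intervals of $[0,1)$ that $S(t)-F(t)=1$, where $S(t)=\langle 1/2-2t\rangle+\langle 1/2+t\rangle+\langle t\rangle$ is the power of $\pi^{p-1}$ produced by Gross--Koblitz and $F(t)=-\lfloor 1/4-t\rfloor-\lfloor 3/4-t\rfloor-\lfloor 1/2+t\rfloor$ is the floor-function total from $_2G_2$. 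This is a routine interval-by-interval check.
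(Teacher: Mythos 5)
Your plan is essentially the paper's proof run in the opposite direction: the paper starts from the character sum, applies Gross--Koblitz to turn the three Gauss sums into $\Gamma_p\left(\left\langle\frac{1}{2}-\frac{2j}{p-1}\right\rangle\right)$, $\Gamma_p\left(\left\langle\frac{1}{2}+\frac{j}{p-1}\right\rangle\right)$, $\Gamma_p\left(\frac{j}{p-1}\right)$, splits the first via the duplication formula \eqref{prod-1} with $m=2$ (producing exactly your Teichm\"uller factor $\varphi(2)\,\overline{\omega}^j(4)$), and does the same interval-by-interval check of $\left\lfloor\frac{1}{2}-\frac{2j}{p-1}\right\rfloor=\left\lfloor\frac{1}{4}-\frac{j}{p-1}\right\rfloor+\left\lfloor\frac{3}{4}-\frac{j}{p-1}\right\rfloor$ to reconcile the $\pi$-powers with the floor exponents of ${_2G_2}$, so the content is identical. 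One small correction to your bookkeeping: the $\Gamma_p\left(\frac{1}{2}\right)$ coming from the duplication identity sits in the numerator and simply cancels against the $\Gamma_p\left(\frac{1}{2}\right)$ in the ${_2G_2}$ denominator by division; you should not invoke $\Gamma_p\left(\frac{1}{2}\right)^2=-1$, which would both introduce a spurious sign and is in any case only valid for $p\equiv1\pmod 4$ (by \eqref{prod-3} one has $\Gamma_p\left(\frac{1}{2}\right)^2=-\varphi(-1)$).
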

\begin{proof}
Let $A=\displaystyle\sum_{\chi\in\widehat{\mathbb{F}_p^\times}}g(\varphi\chi^2)g(\varphi\overline{\chi})g(\overline{\chi})
\chi\left(\frac{x}{4}\right).$ Since $\widehat{\mathbb{F}_p^\times}=\{\omega^j:0\leq j\leq p-2\},$ replacing $\chi$ by 
$\omega^j$ and applying Gross-Koblitz formula we obtain
\begin{align}\label{eqn-14}
A&=-\sum_{j=0}^{p-2}\omega^j\left(\frac{x}{4}\right)\pi^{(p-1)\ell_j}
~\Gamma_p\left(\left\langle\frac{1}{2}-\frac{2j}{p-1}\right\rangle\right)\Gamma_p\left(\left\langle\frac{1}{2}+\frac{j}{p-1}\right\rangle\right)\\
&\times\Gamma_p\left(\frac{j}{p-1}\right)\notag,
\end{align}
where $\ell_j=\langle\frac{1}{2}-\frac{2j}{p-1}\rangle+\langle\frac{1}{2}+\frac{j}{p-1}\rangle+\left(\frac{j}{p-1}\right).$
Applying \eqref{prod-1} with $x=\left\langle\frac{1}{2}-\frac{2j}{p-1}\right\rangle$ and $m=2$ we obtain

\begin{align*}
\Gamma_p\left(\left\langle\frac{1}{2}-\frac{2j}{p-1}\right\rangle\right)
=\frac{\Gamma_p\left(\frac{1}{2}\left\langle\frac{1}{2}-\frac{2j}{p-1}\right\rangle\right)
\Gamma_p\left(\frac{1}{2}\left\langle\frac{1}{2}-\frac{2j}{p-1}\right\rangle+\frac{1}{2}\right)}
{\Gamma_p(\frac{1}{2})~\omega(2^{(1-p)(1-\langle\frac{1}{2}-\frac{2j}{p-1}\rangle)})}.
\end{align*}
Taking $j$ in the intervals $[0,\lfloor\frac{p-1}{4}\rfloor], (\lfloor\frac{p-1}{4}\rfloor, \lfloor\frac{3(p-1)}{4}\rfloor]$ and 
$(\lfloor\frac{3(p-1)}{4}\rfloor,p-2]$ we verify that
\begin{align*}
&\Gamma_p\left(\frac{1}{2}\left\langle\frac{1}{2}-\frac{2j}{p-1}\right\rangle\right)
\Gamma_p\left(\frac{1}{2}\left\langle\frac{1}{2}-\frac{2j}{p-1}\right\rangle+\frac{1}{2}\right)\\
&=\Gamma_p\left(\left\langle\frac{1}{4}-\frac{j}{p-1}\right\rangle\right)\Gamma_p\left(\left\langle\frac{3}{4}-\frac{j}{p-1}\right\rangle\right)
\end{align*}
and $\omega(2^{(1-p)(1-\langle\frac{1}{2}-\frac{2j}{p-1}\rangle)})
=\varphi(2)~\overline{\omega}^j(4).$ Therefore, we can write
\begin{align}\label{eqn-15}
\Gamma_p\left(\left\langle\frac{1}{2}-\frac{2j}{p-1}\right\rangle\right)
=\frac{\Gamma_p\left(\left\langle\frac{1}{4}-\frac{j}{p-1}\right\rangle\right)\Gamma_p\left(\left\langle\frac{3}{4}-\frac{j}{p-1}\right\rangle\right)}
{\Gamma_p(\frac{1}{2})~\varphi(2)~\overline{\omega}^j(4)}.
\end{align}
Substituting \eqref{eqn-15} into \eqref{eqn-14} we obtain
\begin{align}\label{eqn-16}
A&=-\frac{\varphi(2)}{\Gamma_p(\frac{1}{2})}\sum_{j=0}^{p-2}\omega^j(x)~\pi^{(p-1)\ell_j}~\Gamma_p\left(\left\langle\frac{1}{4}-\frac{j}{p-1}\right\rangle\right)\Gamma_p\left(\left\langle\frac{3}{4}-\frac{j}{p-1}\right\rangle\right)\\
&\times\Gamma_p\left(\left\langle\frac{1}{2}+\frac{j}{p-1}\right\rangle\right)
\Gamma_p\left(\frac{j}{p-1}\right)\notag.
\end{align}
Now, \begin{align}
\ell_j&=\left\langle\frac{1}{2}-\frac{2j}{p-1}\right\rangle+\left\langle\frac{1}{2}+\frac{j}{p-1}\right\rangle+\left(\frac{j}{p-1}\right)\notag\\
&=1-\left\lfloor\frac{1}{2}-\frac{2j}{p-1}\right\rfloor-\left\lfloor\frac{1}{2}+\frac{j}{p-1}\right\rfloor.\notag
\end{align}
By considering $\left\lfloor\frac{1}{2}-\frac{2j}{p-1}\right\rfloor=2k+s$ for some $k\in\mathbb{Z}$ and $s=0,1$ it is straight forward to verify that
$\left\lfloor\frac{1}{2}-\frac{2j}{p-1}\right\rfloor=\left\lfloor\frac{1}{4}-\frac{j}{p-1}\right\rfloor
+\left\lfloor\frac{3}{4}-\frac{j}{p-1}\right\rfloor.$ This gives
\begin{align}\label{eqn-17}
\ell_j=1-\left\lfloor\frac{1}{4}-\frac{j}{p-1}\right\rfloor-\left\lfloor\frac{3}{4}-\frac{j}{p-1}\right\rfloor
-\left\lfloor\frac{1}{2}+\frac{j}{p-1}\right\rfloor.
\end{align}
Substituting \eqref{eqn-17} into \eqref{eqn-16} we obtain
\begin{align*}
A=p(1-p)\varphi(2)\Gamma_p\left(\frac{1}{4}\right)\Gamma_p\left(\frac{3}{4}\right)~{_2G_2}\left[\begin{array}{cc}
\frac{1}{4},& \frac{3}{4}\vspace{1mm}\\
0,& \frac{1}{2}
\end{array}\mid\frac{1}{x}\right].
\end{align*}
This completes the proof.
\end{proof}
\begin{proof}[Proof of Theorem \ref{SV-1}]
Comparing Proposition \ref{prop-1} and Proposition \ref{prop-2} for $x=1$ we prove \eqref{value-1}. Now, letting $x\neq0,1$, and $1-x=a^2$ for some $a\in\mathbb{F}_p^\times$ and then comparing Proposition \ref{prop-1} and Proposition \ref{prop-2}
we obtain
\begin{align}\label{equation-1}
{_2G_2}\left[\begin{array}{cc}
\frac{1}{4},& \frac{3}{4}\vspace{1mm}\\
0,& \frac{1}{2}
\end{array}\mid\frac{1}{x}\right]=\frac{-\varphi(-1)}{\Gamma_p(\frac{1}{4})\Gamma_p(\frac{3}{4})}(\varphi(1+a)+\varphi(1-a)).
\end{align}
Then replacing $x$ by $\frac{1}{t}$ in \eqref{equation-1} we obtain \eqref{value-2}.
Finally, if $1-x$ is not a square in $\mathbb{F}_p$ then again, comparing Proposition \ref{prop-1} and Proposition \ref{prop-2}
we obtain
\begin{align}\label{equation-2}
{_2G_2}\left[\begin{array}{cc}
\frac{1}{4},& \frac{3}{4}\vspace{1mm}\\
0,& \frac{1}{2}
\end{array}\mid\frac{1}{x}\right]=0.
\end{align} Replacing $x$ by $\frac{1}{t}$ in \eqref{equation-2} we derive \eqref{value-3}.
This completes the proof of the theorem.
\end{proof}


\begin{proof}[Proof of Theorem \ref{kummer-1}]
If $t\neq0,1$ and $1-\frac{1}{t}=\frac{t-1}{t}\neq\square$ then from \eqref{value-3} we have
\begin{align}\label{equation-3}
{_2G_2}\left[\begin{array}{cc}
\frac{1}{4},& \frac{3}{4}\vspace{1mm}\\
0,& \frac{1}{2}
\end{array}\mid t\right]=0.
\end{align}
Replacing $t$ by $\frac{1}{x}$ in \eqref{equation-3} we obtain that if $1-x\neq\square$ then 
\begin{align}\label{equation-4}
{_2G_2}\left[\begin{array}{cc}
\frac{1}{4},& \frac{3}{4}\vspace{1mm}\\
0,& \frac{1}{2}
\end{array}\mid \frac{1}{x}\right]=0.
\end{align}
Similarly, if $t\neq0,1$ and $1-\frac{1}{1-t}=\frac{t}{t-1}\neq\square$ then \eqref{value-3} gives
\begin{align}\label{equation-5}
{_2G_2}\left[\begin{array}{cc}
\frac{1}{4},& \frac{3}{4}\vspace{1mm}\\
0,& \frac{1}{2}
\end{array}\mid1- t\right]=0.
\end{align}
Replacing $1-t$ by $\frac{1}{1-x}$ in \eqref{equation-5} we can write that if $x\neq\square$ then
\begin{align}\label{equation-6}
{_2G_2}\left[\begin{array}{cc}
\frac{1}{4},& \frac{3}{4}\vspace{1mm}\\
0,& \frac{1}{2}
\end{array}\mid\frac{1}{1-x}\right]=0.
\end{align}
Combining \eqref{equation-4} and \eqref{equation-6} we obtain \eqref{trans-1}.
Now, let $x=b^2.$ Putting $1-x=\frac{1}{t}$ we have $1-\frac{1}{t}=b^2.$ Applying \eqref{value-2} we have
\begin{align}
{_2G_2}\left[\begin{array}{cc}
\frac{1}{4},& \frac{3}{4}\vspace{1mm}\\
0,& \frac{1}{2}
\end{array}\mid t\right]=\frac{-\varphi(-1)}{\Gamma_p(\frac{1}{4})\Gamma_p(\frac{3}{4})}(\varphi(1+b)+\varphi(1-b)).\notag
\end{align}
Therefore, if $x=b^2$ then
\begin{align}\label{equation-7}
{_2G_2}\left[\begin{array}{cc}
\frac{1}{4},& \frac{3}{4}\vspace{1mm}\\
0,& \frac{1}{2}
\end{array}\mid \frac{1}{1-x}\right]=\frac{-\varphi(-1)}{\Gamma_p(\frac{1}{4})\Gamma_p(\frac{3}{4})}(\varphi(1+b)+\varphi(1-b)).
\end{align}
Combining \eqref{equation-4} and \eqref{equation-7} we deduce \eqref{trans-2}.
Let $1-x=a^2.$ Also, let $x=\frac{1}{t}$. Then $1-\frac{1}{t}=a^2.$ Using \eqref{value-2} we obtain that 
\begin{align}
{_2G_2}\left[\begin{array}{cc}
\frac{1}{4},& \frac{3}{4}\vspace{1mm}\\
0,& \frac{1}{2}
\end{array}\mid t\right]=\frac{-\varphi(-1)}{\Gamma_p(\frac{1}{4})\Gamma_p(\frac{3}{4})}(\varphi(1+a)+\varphi(1-a)).\notag
\end{align} 
Therefore, if $1-x=a^2$ then 
\begin{align}\label{equation-8}
{_2G_2}\left[\begin{array}{cc}
\frac{1}{4},& \frac{3}{4}\vspace{1mm}\\
0,& \frac{1}{2}
\end{array}\mid \frac{1}{x}\right]=\frac{-\varphi(-1)}{\Gamma_p(\frac{1}{4})\Gamma_p(\frac{3}{4})}(\varphi(1+a)+\varphi(1-a)).
\end{align}
Combining \eqref{equation-7} and \eqref{equation-8} we derive \eqref{trans-3}.
Finally, comparing \eqref{equation-6} and \eqref{equation-8} we obtain \eqref{trans-4}. This completes the proof of the theorem.
\end{proof}


\begin{proof}[Proof of Theorem \ref{sum-1}]
Again, consider the sum $A=\displaystyle\sum_{\chi\in\widehat{\mathbb{F}_p^\times}}g(\varphi\chi^2)g(\varphi\overline{\chi})g(\overline{\chi})
\chi\left(\frac{x}{4}\right).$ Then from \eqref{eqn-16}, and \eqref{eqn-17} we have
\begin{align}\label{eqn-101}
A&=-\frac{\varphi(2)}{\Gamma_p(\frac{1}{2})}\sum_{j=0}^{p-2}\omega^j(x)~\pi^{(p-1)\ell_j}~\Gamma_p\left(\left\langle\frac{1}{4}-\frac{j}{p-1}\right\rangle\right)\Gamma_p\left(\left\langle\frac{3}{4}-\frac{j}{p-1}\right\rangle\right)\\
&\times\Gamma_p\left(\left\langle\frac{1}{2}+\frac{j}{p-1}\right\rangle\right)
\Gamma_p\left(\frac{j}{p-1}\right)\notag,
\end{align}
where $\ell_j=1-\left\lfloor\frac{1}{4}-\frac{j}{p-1}\right\rfloor-\left\lfloor\frac{3}{4}-\frac{j}{p-1}\right\rfloor
-\left\lfloor\frac{1}{2}+\frac{j}{p-1}\right\rfloor.$
Now, the term for $j=0$ present in \eqref{eqn-101} is equal to $p\varphi(2)\Gamma_p\left(\frac{1}{4}\right)
\Gamma_p\left(\frac{3}{4}\right).$ Therefore, we have
\begin{align}\label{eqn-102}
A&=-\frac{\varphi(2)}{\Gamma_p(\frac{1}{2})}\sum_{j=1}^{p-2}\omega^j(x)~\pi^{(p-1)\ell_j}~\Gamma_p\left(\left\langle\frac{1}{4}-\frac{j}{p-1}\right\rangle\right)\Gamma_p\left(\left\langle\frac{3}{4}-\frac{j}{p-1}\right\rangle\right)
\\&\times
\Gamma_p\left(\left\langle\frac{1}{2}+\frac{j}{p-1}\right\rangle\right)
\Gamma_p\left(\frac{j}{p-1}\right)+p\varphi(2)\Gamma_p\left(\frac{1}{4}\right)
\Gamma_p\left(\frac{3}{4}\right)\notag.
\end{align}
Using \eqref{lemma-1} we can write
\begin{align}
A&=p\varphi(2)\Gamma_p\left(\frac{1}{4}\right)
\Gamma_p\left(\frac{3}{4}\right)+\frac{\varphi(2)}{\Gamma_p(\frac{1}{2})}\sum_{j=1}^{p-2}\omega^j(-x)~\pi^{(p-1)\ell_j}~\Gamma_p\left(\left\langle\frac{1}{4}-\frac{j}{p-1}\right\rangle\right)\notag
\\&\times\Gamma_p\left(\left\langle\frac{3}{4}-\frac{j}{p-1}\right\rangle\right)
\Gamma_p\left(\left\langle\frac{1}{2}+\frac{j}{p-1}\right\rangle\right)
\Gamma_p\left(\left\langle 1-\frac{j}{p-1}\right\rangle\right)
\Gamma_p\left(\frac{j}{p-1}\right)^2\notag\\
&=-p\varphi(2)\sum_{j=1}^{p-2}\omega^j(-x)
\frac{(-p)^{(-\left\lfloor\frac{1}{2}+\frac{j}{p-1}\right\rfloor)}
\Gamma_p\left(\left\langle\frac{1}{2}+\frac{j}{p-1}\right\rangle\right)}{\Gamma_p(\frac{1}{2})}
\Gamma_p\left(\left\langle 1-\frac{j}{p-1}\right\rangle\right)
\notag\\
&\times(-p)^{-\left\lfloor\frac{1}{4}-\frac{j}{p-1}\right\rfloor-\left\lfloor\frac{3}{4}-\frac{j}{p-1}\right\rfloor}
\Gamma_p\left(\frac{j}{p-1}\right)^2\Gamma_p\left(\left\langle\frac{1}{4}-\frac{j}{p-1}\right\rangle\right)
\Gamma_p\left(\left\langle\frac{3}{4}-\frac{j}{p-1}\right\rangle\right)\notag\\
&+p\varphi(2)\Gamma_p\left(\frac{1}{4}\right)
\Gamma_p\left(\frac{3}{4}\right).\notag
\end{align}
Also, applying \eqref{lemma-2} we obtain
\begin{align}\label{eqn-103}
A&=-\varphi(2)\sum_{t\in\mathbb{F}_p^{\times}}\varphi(t(t-1))\sum_{j=1}^{p-2}\omega^j\left(\frac{x}{t}\right)
(-p)^{-\left\lfloor\frac{1}{4}-\frac{j}{p-1}\right\rfloor-\left\lfloor\frac{3}{4}-\frac{j}{p-1}\right\rfloor}\\
&\times\Gamma_p\left(\left\langle\frac{1}{4}-\frac{j}{p-1}\right\rangle\right)
 \Gamma_p\left(\left\langle\frac{3}{4}-\frac{j}{p-1}\right\rangle\right)\Gamma_p\left(\frac{j}{p-1}\right)^2\notag\\
&~~+p\varphi(2)\Gamma_p\left(\frac{1}{4}\right)
\Gamma_p\left(\frac{3}{4}\right).\notag
\end{align}
The term under summation for $j=0$ is equal to 
\begin{align*}
&-\varphi(2)\displaystyle\sum_{t\in\mathbb{F}_p^{\times}}\varphi(t(t-1))\Gamma_p\left(\frac{1}{4}\right)
\Gamma_p\left(\frac{3}{4}\right)=-\varphi(-2)\Gamma_p\left(\frac{1}{4}\right)\Gamma_p\left(\frac{3}{4}\right)J(\varphi,\varphi)\\&=-p\varphi(2)\Gamma_p\left(\frac{1}{4}\right)\Gamma_p\left(\frac{3}{4}\right){\varphi\choose\varphi}=\varphi(2)
\Gamma_p\left(\frac{1}{4}\right)\Gamma_p\left(\frac{3}{4}\right).
\end{align*}
Note that the last equality is obtained by applying \eqref{rel-1}.
Using this value in \eqref{eqn-103} we obtain
\begin{align}\label{eq-1}
A=(p-1)\varphi(2)\Gamma_p\left(\frac{1}{4}\right)\Gamma_p\left(\frac{3}{4}\right)\left(1+\sum_{t\in\mathbb{F}_p^{\times}}\varphi(t(t-1))
{_2G_2}\left[\begin{array}{cc}
\frac{1}{4}, & \frac{3}{4}\\
0,& 0
\end{array}\mid\frac{t}{x}\right]\right).
\end{align}
Now, from Proposition \ref{prop-1} comparing the values of $A$ we deduce \eqref{formula-1}, \eqref{formula-2}, and 
\eqref{formula-3}. This completes the proof of the theorem.
\begin{proof}[Proof of Theorem \ref{sum-3}]
Applying the transformations \cite[Lemma 3.3]{mccarthy-pacific} and \cite[Proposition 2.5]{mccarthy-ffa}
for $x,t\neq0$ we obtain
\begin{align}\label{eq-2}
{_2G_2}\left[\begin{array}{cc}
\frac{1}{4}, & \frac{3}{4}\\
0,& 0
\end{array}\mid \frac{t}{x}\right]&={\chi_4^3\choose\varepsilon}^{-1}{_2F_1}\left(\begin{array}{cc}
\chi_4, & \chi_4^3\\
& \varepsilon
\end{array}\mid\frac{x}{t}\right)\notag\\&=-p\cdot{_2F_1}\left(\begin{array}{cc}
\chi_4, & \chi_4^3\\
& \varepsilon
\end{array}\mid\frac{x}{t}\right).
\end{align}
Note that we obtain the last equality by using \eqref{rel-1}.
\end{proof}
Let $x=1.$ Then substituting \eqref{eq-2} into \eqref{formula-1} we have
\begin{align}\label{eq-3}
-p\sum_{t\in\mathbb{F}_p^{\times}}\varphi(t(t-1)){_2F_1}\left(\begin{array}{cc}
\chi_4, & \chi_4^3\\
& \varepsilon
\end{array}\mid\frac{1}{t}\right)=-1+\frac{p\varphi(-1)}{\Gamma_p(\frac{1}{4})\Gamma_p(\frac{3}{4})}.
\end{align}
Since $p\equiv1\pmod{4}$ so $\varphi(-1)=1$ and \eqref{prod-3} gives $\Gamma_p(\frac{1}{4})\Gamma_p(\frac{3}{4})=-\chi_4(-1).$ Substituting these two values into \eqref{eq-3} and replacing $t$ by $1/t$ we derive \eqref{formula-4}. Similarly, if $x\neq1$ and $1-x$ is not a square then substituting \eqref{eq-2} into \eqref{formula-2} and replacing $t$ by $x/t$ we obtain \eqref{formula-5}. Finally, if $x\neq1$ and $1-x=a^2$ then substituting \eqref{eq-2} into \eqref{formula-3} and replacing $t$ by $x/t$ we deduce \eqref{formula-6}. This completes the proof.
\end{proof}

\begin{proof}[Proof of Theorem \ref{paf}]
If $1-x\neq\square$ then applying Proposition \ref{prop-1} and Proposition \ref{prop-2} we have
\begin{align*}
{_2G_2}\left[\begin{array}{cc}
\frac{1}{4}, & \frac{3}{4}\vspace{1mm}\\
0, & \frac{1}{2}
\end{array}\mid\frac{1}{x}
\right]={_2G_2}\left[\begin{array}{cc}
\frac{1}{4}, & \frac{3}{4}\vspace{1mm}\\
0, & \frac{1}{2}
\end{array}\mid\frac{x-1}{x}
\right]=0.
\end{align*}
This proves \eqref{paf-1}. Now, let $1-x=a^2$ for some $a\in\mathbb{F}_p^\times.$ Let $a^{-1}$ denote the inverse of $a$ in $\mathbb{F}_p^{\times}.$ Then again applying Proposition \ref{prop-1} and Proposition \ref{prop-2} we have
\begin{align}\label{one}
{_2G_2}\left[\begin{array}{cc}
\frac{1}{4}, & \frac{3}{4}\vspace{1mm}\\
0, & \frac{1}{2}
\end{array}\mid\frac{1}{x}
\right]=-\frac{\varphi(-1)}{\Gamma_p(\frac{1}{4})\Gamma_p(\frac{3}{4})}(\varphi(1+a)+\varphi(1-a)),
\end{align}
and 
\begin{align}\label{two}
{_2G_2}\left[\begin{array}{cc}
\frac{1}{4}, & \frac{3}{4}\vspace{1mm}\\
0, & \frac{1}{2}
\end{array}\mid\frac{x-1}{x}
\right]=-\frac{\varphi(-1)}{\Gamma_p(\frac{1}{4})\Gamma_p(\frac{3}{4})}(\varphi(1+a^{-1})+\varphi(1-a^{-1})).
\end{align}
Comparing \eqref{one} and \eqref{two} we prove \eqref{paf-2}. This completes the proof.
\end{proof}

\section{Concluding remarks}
\begin{remark} 
Let $\mathbb{F}_q$ be a finite field with $q$ elements, where $q=p^r$.
We note that all the transformations and special values of $p$-adic hypergeometric series that are proved in this paper can also be extended to the $q$-version of the $p$-adic hypergeometric series ${_nG_n[\cdots\mid t]_q}$ with $t\in\mathbb{F}_q$ using the definition \cite[Definition 5.1]{mccarthy-pacific}. We avoid this case here for brevity. We also make the same comment for Gaussian hypergeometric functions over $\mathbb{F}_q$. We believe that using this method we can settle many other transformation formulas for $p$-adic hypergeometric series that are analogous to classical hypergeometric series transformations. This is considered as the subject of forthcoming work.
\end{remark}


\end{document}